\theoremstyle{definition}
\newtheorem{thm}{Theorem}[section]
\newtheorem{lem}[thm]{Lemma}
\newtheorem{prp}[thm]{Proposition}
\newtheorem{dfn}[thm]{Definition}
\newtheorem{cor}[thm]{Corollary}
\newtheorem{rmk}[thm]{Remark}
\def\vartau{\uptau}
\def\T{\bold T}
\def\ccite#1{\textcolor{Red}{\cite{#1}}}
\numberwithin{equation}{section}
\begin{document}

\bigskip

\title[S\lowercase{emiflat} O\lowercase{rbifold} P\lowercase{rojections}]
{\LARGE \rm S\lowercase{emiflat} O\lowercase{rbifold} P\lowercase{rojections}}

\author{S. W\lowercase{alters}}
\date{Oct. 19, 2017}
\dedicatory{In Memory of Mom}
\address{Department of Mathematics and Statistics, University of Northern B.C., Prince George, B.C. V2N 4Z9, Canada.}
\email[]{samuel.walters@unbc.ca \ \text{or} \ walters@unbc.ca}
\subjclass[2000]{46L80, 46L40, 46L35, 46L85, 55N15}
\keywords{C*-algebras, automorphisms, noncommutative torus, rotation algebra, orbifold, K-group, Connes Chern characters}
\urladdr{hilbert.unbc.ca \ or \ web.unbc.ca/~walters}

\begin{abstract}
We compute the semiflat positive cone $K_0^{+SF}(A_\theta^\sigma)$  of the $K_0$-group of the irrational rotation orbifold $A_\theta^\sigma$ under the noncommutative Fourier transform $\sigma$ and show that it is determined by  classes of positive trace and the vanishing of two topological invariants. The semiflat orbifold projections are 3-dimensional and come in three basic topological genera: $(2,0,0)$, $(1,1,2)$, $(0,0,2)$. (A projection is called semiflat when it has the form $h + \sigma(h)$ where $h$ is a flip-invariant projection such that $h\sigma(h)=0$.) Among other things, we also show that every number in $(0,1) \cap (2\mathbb Z + 2\mathbb Z\theta)$ is the trace of a semiflat projection in $A_\theta$. The noncommutative Fourier transform is the order 4 automorphism $\sigma: V \to U \to V^{-1}$ (and the flip is $\sigma^2$: $U \to U^{-1},\ V \to V^{-1}$), where $U,V$ are the canonical unitary generators of the rotation algebra $A_\theta$ satisfying $VU = e^{2\pi i\theta} UV$.
\end{abstract}

\maketitle

\begin{quote}
{\Small \tableofcontents}
\end{quote}

\newpage

\textcolor{blue}{\large\section{Introduction}}

For each irrational number $\theta$ in $(0,1)$ the irrational rotation C*-algebra $A_\theta$ is the universal (and unique) C*-algebra generated by unitaries $U,V$ satisfying the  Heisenberg relation
\[
VU = e^{2\pi i\theta} UV.
\]
The noncommutative Fourier transform is the canonical order four automorphism $\sigma$ of $A_\theta$ defined by the equations
\[
\sigma(U) = V^{-1}, \quad \sigma(V) = U.
\]
The flip automorphism $\Phi = \sigma^2$ of the rotation C*-algebra $A_\theta$ is defined by $\Phi(U) = U^{-1}, \Phi(V) = V^{-1}$, and it was studied extensively in \ccite{BEEKa} \ccite{BEEKb} \ccite{BK} \ccite{SWprojmodules} \ccite{SWcmp}.

If one represents the unitaries $U, V$ on the Hilbert space $L^2(\mathbb R)$ in the canonical way as complex phase multiplication and translation operators, the automorphism $\sigma$ corresponds exactly to the classical Fourier transform on $L^2$, hence the name.

In this paper we show that the semiflat positive cone $K_0^{+SF}(A_\theta^\sigma)$  of the $K_0$-group of the irrational rotation orbifold $A_\theta^\sigma$ (a sort of noncommutative sphere \ccite{SWcjm}) is determined by classes of positive trace and the vanishing of two topological invariants (Theorem \ref{maintheorem}). We also determine semiflat and flat projections by their canonical traces and topological invariants up to Fourier-invariant unitary equivalence (Theorems \ref{identifyflat} and \ref{identifysemiflat}); namely, Fourier invariant projections as well as projections that are orthogonal to their transform (of which there are two kinds). First, let us define these.

\medskip

\begin{dfn}
By a {\it cylic} (or {\it $\sigma$-cyclic}) projection in $A_\theta$ we mean a projection $g$ that is orthogonal to its $\sigma$-orbit -- that is, $g, \sigma(g), \sigma^2(g), \sigma^3(g)$ are mutually orthogonal. The associated $\sigma$-invariant projection
\begin{equation}\label{defofflat}
f = \sigma^*(g) := g + \sigma(g) + \sigma^2(g) + \sigma^3(g)
\end{equation}
will be called {\it flat} (or {\it $\sigma$-flat}). We refer to $g$ as a {\it cyclic projection} for $f$. 
\end{dfn}

Flat projections have been used in \ccite{SWK-ind} to obtain the K-inductive structure of the Fourier transform $\sigma$.

\begin{dfn}
A projection $h$ is {\it semicyclic} (with respect to $\sigma$) if $h\sigma(h) = 0$ and $\sigma^2(h) = h$ (i.e., $h$ is flip invariant). The associated Fourier invariant projection $f = h + \sigma(h)$ is called {\it semiflat}. Two projections are {\it $\sigma$-unitarily equivalent} when they are unitarily equivalent by a unitary that is $\sigma$-invariant.
\end{dfn}

The sum of two orthogonal flat (resp., semiflat) projections is flat (resp., semiflat). If $g$ is a cyclic projection, then $g + \sigma^2(g)$ is semicyclic. 

We will see that the trace and the topological genus of semiflat projections determines them up to $\sigma$-unitary equivalence. (The meaning of topological genus is given in Definition \ref{toptype}.)

\begin{dfn}
The {\it semiflat positive cone}, denoted $K_0^{+SF}(A_\theta^\sigma)$, consists of the nonzero classes in positive cone $K_0^+(A_\theta^\sigma)$ given by semiflat projections in $A_\theta^\sigma$. 
\end{dfn}

(We excluded the zero class for simplicity, although we could have included it.)

\begin{thm}\label{maintheorem} (Main Theorem.)
{\it Let $\theta$ be irrational. The semiflat positive cone $K_0^{+SF}(A_\theta^\sigma)$ consists of the $K_0$-classes $x$ of positive trace $\vartau(x)$ and
\[
\psi_{10}(x) = \psi_{11}(x) = 0.
\]
Further, the semiflat projections come in three basic topological genera: $(2,0,0)$, $(1,1,2)$, $(0,0,2)$.
}
\end{thm}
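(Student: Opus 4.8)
The plan is to reduce the statement to the classification of semiflat projections already recorded in Theorem \ref{identifysemiflat}, which determines a semiflat projection up to $\sigma$-unitary equivalence by its canonical trace and its topological genus (Definition \ref{toptype}). Granting this, the cone $K_0^{+SF}(A_\theta^\sigma)$ is nothing but the set of $K_0$-classes realized by admissible (trace, genus) data, so the whole problem becomes the identification of which classes in $K_0^+(A_\theta^\sigma)$ are so realized. I would begin by fixing an explicit basis of invariants on $K_0(A_\theta^\sigma)$: the canonical trace $\vartau$ together with the topological (Connes--Chern) invariants $\psi_{ij}$ obtained by pairing against the cyclic cocycles supported at the $\sigma$-singular points of the orbifold. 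Under the commutative analogy $\sigma$ acts on $\mathbb T^2$ by $(x,y)\mapsto(y,-x)$, whose only fixed points are $p_0=(0,0)$ and $p_1=(\tfrac12,\tfrac12)$, while the order-two points $(\tfrac12,0),(0,\tfrac12)$ form a single $\sigma$-orbit; the invariants $\psi_{10},\psi_{11}$ are the two attached to the order-four points $p_0,p_1$.

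For the necessity direction I would write a semiflat class as $f=h+\sigma(h)$ with $h$ semicyclic, so that $\sigma^2(h)=h$ and $h\sigma(h)=0$, and compute $\psi_{10}(f),\psi_{11}(f)$ from the local finite-dimensional model of $f$ at $p_0,p_1$. Since each $p_j$ is individually $\sigma$-fixed but $h$ is only flip-invariant, the orthogonality $h\sigma(h)=0$ should force the isotypic multiplicities of $h$ and $\sigma(h)$ at $p_j$ to be complementary halves of the local $\mathbb Z/4$-graded decomposition, so that the graded alternating sum recorded by $\psi_{1j}$ cancels. This would yield $\psi_{10}(f)=\psi_{11}(f)=0$, together with the trivial observation $\vartau(f)>0$, for every nonzero semiflat $f$.

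For the sufficiency direction I would show that the two equations $\psi_{10}(x)=\psi_{11}(x)=0$, read against the explicit basis above, cut out exactly the sublattice of $K_0(A_\theta^\sigma)$ generated by the three genus classes, and then establish a realization statement: every class of this sublattice with $\vartau(x)>0$ is the class of an honest semiflat projection, assembled as an orthogonal sum of semicyclic blocks $h$ followed by the symmetrization $h+\sigma(h)$. The main obstacle is precisely this realization step. It requires producing semicyclic projections of prescribed (small) trace realizing each of the three basic genera, and controlling their mutual orthogonality inside $A_\theta$ so that the resulting $K_0$-class lands on the prescribed target. I expect this to rest on a flip-equivariant Powers--Rieffel type construction, arranged so that $h$ and $\sigma(h)$ are orthogonal, together with a stability argument to absorb the trace once the genus is fixed (cf.\ the flat case underlying Theorem \ref{identifyflat}).

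It remains to pin down the three basic genera. Here I would classify the admissible local models of a semicyclic $h$ at the orbifold singularities: at each of the two order-four points and at the order-two orbit, the constraints $\sigma^2(h)=h$ and $h\sigma(h)=0$ restrict the possible isotypic multiplicities to a short list, and enumerating the minimal nonzero solutions should yield exactly the triples $(2,0,0)$, $(1,1,2)$ and $(0,0,2)$. I would then verify that these three are additively independent, that each is consistent with $\psi_{10}=\psi_{11}=0$, and that they generate all semiflat genera, so that no further minimal type occurs; the residual work is the bookkeeping confirming that every semiflat class is a nonnegative integer combination of the three.
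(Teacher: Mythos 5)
Your overall architecture (necessity of the two vanishing conditions, then sufficiency by cutting out a sublattice of $K_0$ and realizing it by explicit projections) has the same shape as the paper's, but two of your key steps are gaps rather than proofs. First, the ``local finite-dimensional model at the $\sigma$-singular points'' does not exist for irrational $\theta$: $A_\theta$ is simple, the $\psi_{ij}$ are unbounded twisted traces on the smooth subalgebra $A_\theta^\infty$, and there is no isotypic decomposition at $p_0,p_1$ whose multiplicities you could enumerate. The necessity direction needs no such machinery: the $\psi_{1j}$ are $\sigma$-invariant and satisfy the $\sigma$-trace identity $\psi_{1j}(xy)=\psi_{1j}(\sigma(y)x)$, and $h\sigma(h)=0$ gives $\sigma(h)h=0$, so $\psi_{1j}(h)=\psi_{1j}(h\cdot h)=\psi_{1j}(\sigma(h)h)=0$ and hence $\psi_{1j}(h+\sigma(h))=0$. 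More seriously, determining \emph{which} classes satisfy $\psi_{10}(x)=\psi_{11}(x)=0$ requires the explicit image of $\T_4$ on $K_0(A_\theta^\sigma)\cong\mathbb Z^9$; the paper imports the nine-vector character table from \cite{SWChern} (Lemma \ref{lemma9vectors}) and solves the two complex (hence four real) linear equations to exhibit the solution set as the integral span of five explicit vectors, whose genera are $(2,0,0)$, $(-1,-1,-2)$, $(0,0,-2)$, $(0,2,0)$, $(1,1,2)$ --- five generators, not three, because the trace is an independent coordinate; your phrase ``the sublattice generated by the three genus classes'' conflates the genus lattice in $\mathbb R^3$ with the $K_0$-sublattice. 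The three basic genera of the theorem emerge only after Lemma \ref{negativetype} and the relation $(0,2,0)=2(1,1,2)+(-2,0,0)+2(0,0,-2)$.

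Second, the realization step you yourself flag as ``the main obstacle'' is the heart of the proof and cannot be waved through. The paper devotes Sections 3 and 5 to it: explicit flip-invariant Powers--Rieffel projections with computed $\T_2$-invariants (Lemmas \ref{lemmaSWprojmodules} and \ref{ees}, Corollary \ref{corh}) produce semiflat projections of each required genus with traces dense in $(0,1)$ (Lemmas \ref{typeslemma} and \ref{negativetype}); the trace theorems \ref{flattraces} and \ref{semiflattraces} supply flat projections of every admissible trace to absorb the residual class once the genera have been subtracted off; and injectivity of $\T_4$, cancellation in $A_\theta^\sigma$, and Lemma \ref{lemma1} are what convert the resulting equality of invariants into a single honest semiflat projection representing $x$. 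Finally, your opening reduction to Theorem \ref{identifysemiflat} is close to circular: that theorem classifies semiflat projections up to $\sigma$-unitary equivalence \emph{given that they exist}, but says nothing about which (trace, genus) data are realized, and the latter is precisely the content of the Main Theorem.
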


Thus, in general, a semiflat projection has genus that is an integral linear combination of these three basic genus types. 

\begin{cor}
{\it Let $\theta$ be irrational and let $f, h$ be two semiflat projections in $A_\theta^\sigma$. Then $f$ and $h$ are $\sigma$-unitarily equivalent if and only if they have the same trace and same genus.
}
\end{cor}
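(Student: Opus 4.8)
The plan is to derive the Corollary from the Main Theorem together with the structural fact that the orbifold $A_\theta^\sigma$ is an AF-algebra. The forward implication is immediate. If $f$ and $h$ are $\sigma$-unitarily equivalent, say $wfw^* = h$ with $w$ a $\sigma$-invariant unitary, then $w$ lies in the fixed-point algebra $A_\theta^\sigma$, so $[f] = [h]$ in $K_0(A_\theta^\sigma)$. Since the canonical trace $\vartau$ and each of the topological invariants entering the genus (Definition \ref{toptype}) factor through the ordered group $K_0(A_\theta^\sigma)$, equal classes force equal trace and equal genus.

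For the converse, suppose $\vartau(f) = \vartau(h)$ and $f, h$ have the same genus. The first step is to show that these data pin down the $K_0$-class. By the Main Theorem the classes $[f], [h]$ lie in $K_0^{+SF}(A_\theta^\sigma)$, so both satisfy $\psi_{10} = \psi_{11} = 0$; what remains of the full list of topological invariants on $K_0(A_\theta^\sigma)$ is exactly what the genus records, while the trace supplies the remaining coordinate. I would verify, using the explicit description of the invariants $\psi_{ij}$ and of the genus, that the map sending a semiflat class to its pair $(\vartau, \text{genus})$ is injective on $K_0^{+SF}(A_\theta^\sigma)$; hence $[f] = [h]$ in $K_0(A_\theta^\sigma)$.

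The second step upgrades equality of classes to unitary equivalence inside $A_\theta^\sigma$. The orbifold $A_\theta^\sigma$ is a unital AF-algebra (it is one of the noncommutative spheres of \ccite{SWcjm}), hence has cancellation and stable rank one. In such an algebra two projections are unitarily equivalent precisely when they determine the same class in $K_0$: cancellation upgrades $[f] = [h]$ to Murray--von Neumann equivalence $f \sim h$ and also gives $1-f \sim 1-h$, and stable rank one then produces a unitary $w \in A_\theta^\sigma$ with $wfw^* = h$. Since every unitary of $A_\theta^\sigma$ is by definition $\sigma$-invariant, this is exactly the desired $\sigma$-unitary equivalence.

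The main obstacle I expect is the injectivity claim of the first step: one must check that no $K$-theoretic invariant beyond the trace and the genus survives on the semiflat locus, i.e. that the constraints $\psi_{10} = \psi_{11} = 0$ together with the genus and trace already saturate the invariants available on $K_0(A_\theta^\sigma)$. This is where the precise integral structure of the genus lattice --- spanned by the three basic types $(2,0,0)$, $(1,1,2)$, $(0,0,2)$ --- and its compatibility with the range of the trace must be used.
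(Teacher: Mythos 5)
Your proposal is correct and follows essentially the same route as the paper, which derives the corollary directly from the Main Theorem (giving $\psi_{10}=\psi_{11}=0$ on semiflat classes) together with the injectivity of $\T_4$ and the cancellation property of $A_\theta^\sigma$. The ``main obstacle'' you flag in your final paragraph is already settled by facts stated in the paper: since $\T_4=(\vartau;\ \psi_{10},\psi_{11};\ \psi_{20},\psi_{21},\psi_{22})$ is injective for irrational $\theta$ (as $K_0(A_\theta^\sigma)\cong\mathbb Z^9$) and the two middle coordinates vanish on semiflat classes, the pair consisting of trace and genus is automatically a complete invariant on the semiflat locus, with no further saturation check required.
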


It is well known that the $K_0$-group of $A_\theta$ is $\mathbb Z^2$, and that the unique tracial state $\vartau$ of $A_\theta$ induces a group isomorphism $\vartau_*: K_0(A_\theta) \to \mathbb Z + \mathbb Z\theta$ when $\theta$ is irrational. (This is a classic theorem of Pimsner and Voiculescu \ccite{PV}, and Rieffel \ccite{MR1981} from 1980-81.) Further, it is also known that each number in $(0,1) \cap (\mathbb Z + \mathbb Z\theta)$ is the trace of a projection in $A_\theta$, namely a Powers-Rieffel projection \ccite{MR1981}.  For our purposes here, we prove the following related results for cyclic, flat, and semiflat projections.

\begin{thm}
{\it Let $\theta$ be irrational.  Each number in $(0,\tfrac14) \cap (\mathbb Z + \mathbb Z\theta)$ is the trace of a cyclic projection in $A_\theta$.
}
\end{thm}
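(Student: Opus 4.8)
The plan is to produce, for each target trace value $t \in (0,\tfrac14) \cap (\mathbb Z + \mathbb Z\theta)$, a single projection $g \in A_\theta$ whose four images $g, \sigma(g), \sigma^2(g), \sigma^3(g)$ under the Fourier transform are mutually orthogonal and each have trace $t$. Since $\sigma$ preserves the unique trace $\vartau$, the flat projection $f = \sigma^*(g)$ would then have trace $4t \in (0,1)$, which is exactly why the constraint $t < \tfrac14$ is natural and presumably necessary. The existence of an honest projection of trace $t$ is not the issue — that is guaranteed by the classical Powers--Rieffel construction whenever $t \in (0,1) \cap (\mathbb Z + \mathbb Z\theta)$ — so the whole content is arranging the mutual orthogonality of the $\sigma$-orbit.

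First I would reduce to a geometric/combinatorial selection problem. The idea is to work inside a larger, more flexible subalgebra where $\sigma$ acts transparently. A natural candidate is a suitable rational or finite-dimensional building block: one can pass to the fixed-point data of the flip and the order-4 symmetry, or embed a matrix algebra $M_n$ (or a circle/AF subalgebra) into $A_\theta$ on which $\sigma$ restricts to a concrete permutation-type action, and then choose $g$ to be supported on an orbit-free region. Concretely, I expect to build $g$ as a Rieffel-type projection associated to a function $\phi$ whose support, translated by the dual lattice action implementing $\sigma$, stays disjoint from the supports of $\sigma(g), \sigma^2(g), \sigma^3(g)$. The orthogonality conditions $g\,\sigma^j(g) = 0$ translate into support-disjointness conditions on $\phi$ and its Fourier-rotated partners, and these are satisfiable precisely when the trace $t$ (the ``mass'' of $\phi$) is small enough to fit four disjoint translates into the available fundamental domain — again pointing to the threshold $\tfrac14$.

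The key steps, in order, are: (1) fix $t \in (0,\tfrac14) \cap (\mathbb Z + \mathbb Z\theta)$ and isolate the convenient subalgebra or representation in which $\sigma$ is explicit; (2) write down a candidate $g$ (a Powers--Rieffel-style projection built from a profile function supported in a narrow strip), compute $\vartau(g) = t$, and verify $g = g^* = g^2$; (3) compute $\sigma(g)$ explicitly and identify the support of each $\sigma^j(g)$; (4) check the four products $g\,\sigma(g)$, $g\,\sigma^2(g)$, $g\,\sigma^3(g)$ vanish by support disjointness, which gives that $g$ is cyclic and hence $f = \sigma^*(g)$ is flat. The main obstacle I anticipate is step (3)–(4): controlling the action of $\sigma$ on the Powers--Rieffel profile. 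Unlike the flip $\sigma^2$, which acts by an easily-described involution on the generators, the full Fourier transform $\sigma$ mixes $U$ and $V$ and does not preserve the natural ``horizontal strip'' support of a standard Powers--Rieffel projection, so the images $\sigma^j(g)$ will be supported on rotated strips. The real work is choosing the profile and the lattice data so that these four rotated supports are genuinely pairwise disjoint while keeping the total mass below $1$; this is where the sharp bound $t < \tfrac14$ must be extracted, and where I would expect the most delicate estimates.
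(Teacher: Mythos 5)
There is a genuine gap: your entire argument rests on step (3)--(4), the claim that one can choose a Powers--Rieffel-type profile so that $g,\sigma(g),\sigma^2(g),\sigma^3(g)$ have pairwise disjoint ``supports,'' and you explicitly leave that step as the anticipated hard part rather than carrying it out. Unfortunately this is not a technicality that routine estimates will close. A standard Powers--Rieffel projection $Vg(U)+f(U)+g(U)V^{-1}$ is localized in the $U$-direction (a ``horizontal strip''), and $\sigma$ sends it to an element localized in the $V$-direction (a ``vertical strip''); in the noncommutative torus a horizontal and a vertical strip of positive mass always meet, so no choice of a narrow profile makes $g\,\sigma(g)=0$ by support disjointness. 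Producing a projection orthogonal to its full $\sigma$-orbit requires a genuinely different construction (in the author's earlier work this is done via theta-function/Heisenberg-module techniques), and the threshold $\tfrac14$ comes from the trivial trace count $4\vartau(g)\le 1$, not from a delicate packing estimate. So the heart of the theorem is exactly the point your outline defers.

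The paper's route is also structurally different in a way your plan would need even if the direct construction worked. It does \emph{not} build a cyclic projection of each trace $t\in(0,\tfrac14)\cap(\mathbb Z+\mathbb Z\theta)$ from scratch: it quotes Lemma \ref{projtraces} (Theorem 1.6 of the author's Math.\ Scand.\ paper) to get cyclic projections only for the special trace values $k|q\theta-p|$ attached to continued-fraction convergents $p/q$ of $\theta$; it then writes an arbitrary $4t\in(0,1)\cap(4\mathbb Z+4\mathbb Z\theta)$ as $4a(q\theta-p)+4b(p'-q'\theta)$ with $a,b>0$ using two consecutive convergents, and glues the two corresponding flat projections orthogonally via the cancellation argument of Lemma \ref{lemma1}. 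The cyclic projection of trace $t$ is then read off as the cyclic component of the resulting flat projection of trace $4t$. If you want to salvage your approach, you should either (i) restrict your explicit construction to the convergent traces $k|q\theta-p|$ and then add the continued-fraction decomposition and orthogonal-sum step to reach all of $(0,\tfrac14)\cap(\mathbb Z+\mathbb Z\theta)$, or (ii) supply an actual construction of a projection orthogonal to its $\sigma$-orbit, which is substantially more than a support-disjointness argument.
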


\begin{thm}\label{flattraces}
{\it Let $\theta$ be irrational. Each number in $(0,1) \cap (4\mathbb Z + 4\mathbb Z\theta)$ is the trace of a flat projection in $A_\theta$.
}
\end{thm}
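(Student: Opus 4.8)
The plan is to deduce this directly from the cyclic projection theorem stated just above. The key observation is that any flat projection $f$ arises as $f = \sigma^*(g)$ for a cyclic projection $g$, and that the unique tracial state $\vartau$ of $A_\theta$ is $\sigma$-invariant: since $\vartau$ is unique, $\vartau\circ\sigma$ is again a tracial state and hence equals $\vartau$, so $\vartau(\sigma^k(g)) = \vartau(g)$ for all $k$. Consequently
\[
\vartau(f) = \vartau(g) + \vartau(\sigma(g)) + \vartau(\sigma^2(g)) + \vartau(\sigma^3(g)) = 4\,\vartau(g).
\]
Thus the traces of flat projections are exactly $4$ times the traces of cyclic projections, and realizing the flat trace $t$ reduces to realizing the cyclic trace $t/4$.

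Concretely, given $t \in (0,1)\cap(4\mathbb Z + 4\mathbb Z\theta)$, I would write $t = 4m + 4n\theta$ with $m,n\in\mathbb Z$ and set $s := t/4 = m + n\theta$. Then $s \in \mathbb Z + \mathbb Z\theta$, and since $0 < t < 1$ we have $0 < s < \tfrac14$, so $s \in (0,\tfrac14)\cap(\mathbb Z+\mathbb Z\theta)$. By the preceding (cyclic) theorem there is a cyclic projection $g$ in $A_\theta$ with $\vartau(g) = s$. By definition its $\sigma$-orbit $g,\sigma(g),\sigma^2(g),\sigma^3(g)$ is mutually orthogonal, so $f := \sigma^*(g) = g + \sigma(g) + \sigma^2(g) + \sigma^3(g)$ is a sum of orthogonal projections, hence a projection, and it is $\sigma$-invariant by construction, hence flat. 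The trace identity above then gives $\vartau(f) = 4s = t$, as required.

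I do not expect any genuine obstacle beyond the cyclic projection theorem itself: all the real content lies in producing a cyclic projection of the prescribed small trace $s \in (0,\tfrac14)$, which is exactly what that theorem supplies. The only remaining points are the elementary bookkeeping that $s = t/4$ lands in $(0,\tfrac14)\cap(\mathbb Z+\mathbb Z\theta)$, and the $\sigma$-invariance of $\vartau$; both are immediate, so the statement is essentially a corollary of the cyclic case.
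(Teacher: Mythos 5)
Your reduction of the flat case to the cyclic case is formally correct as far as it goes: $\vartau$ is $\sigma$-invariant by uniqueness of the trace, so $\vartau(\sigma^*(g))=4\vartau(g)$, and $\sigma^*(g)$ is flat whenever $g$ is cyclic. The problem is that, relative to how the results are actually established in this paper, your argument is circular. The statement that every number in $(0,\tfrac14)\cap(\mathbb Z+\mathbb Z\theta)$ is the trace of a cyclic projection is not an independent input here: it is obtained as a corollary \emph{of} Theorem \ref{flattraces} (one takes a flat projection of trace $4s$ and extracts its cyclic component of trace $s$). The only independently available existence result is Lemma \ref{projtraces}, which produces cyclic projections whose traces have the special form $k|q\theta-p|$ for a convergent $p/q$ of $\theta$ with $k|q\theta-p|<\tfrac14$; an arbitrary $s=m+n\theta\in(0,\tfrac14)$ is not generally of that form, so you cannot simply quote ``the cyclic theorem'' to obtain $g$ of trace $t/4$.

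The missing content is exactly the step your proposal defers to that theorem. Given $t=4k(n\theta-m)\in(0,1)$ with $k,n\ge 1$, the paper chooses consecutive convergents with $\tfrac{m}{n}<\tfrac{p}{q}<\theta<\tfrac{p'}{q'}$ and $p'q-pq'=1$, and uses the identities $\theta=p'(q\theta-p)+p(p'-q'\theta)$ and $1=q'(q\theta-p)+q(p'-q'\theta)$ to write $t=4a(q\theta-p)+4b(p'-q'\theta)$ with $a,b$ positive integers. Lemma \ref{projtraces} then yields cyclic, hence flat, projections of traces $4a(q\theta-p)$ and $4b(p'-q'\theta)$, and Lemma \ref{lemma1} (cancellation in $A_\theta^\sigma$) is needed to conjugate one by a $\sigma$-invariant unitary so that the two become orthogonal and their sum is a single flat projection of trace $t$. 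To make your argument stand you must either supply this decomposition yourself or give an independent proof that every element of $(0,\tfrac14)\cap(\mathbb Z+\mathbb Z\theta)$ is the trace of a cyclic projection; as written, the ``elementary bookkeeping'' conceals the entire theorem.
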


The analogous results hold for semicyclic and semiflat projections.

\begin{thm}
{\it Let $\theta$ be irrational.  Each number in $(0,\tfrac12) \cap (\mathbb Z + \mathbb Z\theta)$ is the trace of a semicyclic projection in $A_\theta$.
}
\end{thm}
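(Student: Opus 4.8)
The plan is to reduce the statement to the already-available cyclic construction wherever possible, and to treat the remaining classes by a direct flip-symmetric construction. First I would observe that a semicyclic projection $h$ of trace $t$ is the same datum as a semiflat projection $f = h + \sigma(h)$ of trace $2t$, since $\vartau$ is $\sigma$-invariant and $h \perp \sigma(h)$; so it suffices to produce, for each $t \in (0,\tfrac12)\cap(\mathbb Z+\mathbb Z\theta)$, a flip-invariant projection $h$ with $h\sigma(h)=0$ and $\vartau(h)=t$. The ``even'' classes, those $t \in 2\mathbb Z + 2\mathbb Z\theta$, are immediate: if $g$ is a cyclic projection of trace $\tfrac{t}{2} \in (0,\tfrac14)\cap(\mathbb Z+\mathbb Z\theta)$, then $h = g + \sigma^2(g)$ is flip-invariant, and mutual orthogonality of the $\sigma$-orbit of $g$ gives $h\sigma(h) = (g+\sigma^2(g))(\sigma(g) + \sigma^3(g)) = 0$, with $\vartau(h) = 2\vartau(g) = t$.

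The substance lies in the remaining classes, namely $t = p + q\theta$ with $p+q$ odd, which fall outside $2\mathbb Z + 2\mathbb Z\theta$ and so cannot be obtained by symmetrizing a cyclic projection. For these I would build $h$ directly, mirroring the construction behind the cyclic theorem but using a $\Phi$-invariant fundamental domain for $\langle\sigma^2\rangle$ rather than a quarter domain for $\langle\sigma\rangle$. In the commutative symbol picture on the $2$-torus, the idea is to support $h$ on a flip-invariant region occupying a pair of \emph{opposite} quadrants about the central fixed point, so that its quarter-turn Fourier rotate $\sigma(h)$ lands in the complementary pair of quadrants and is therefore orthogonal to $h$; such a region has measure at most $\tfrac12$, matching the trace bound. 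Concretely I would assemble a Powers--Rieffel-type element from flip-symmetric functions of $U$ together with the appropriate power of $V$, passing to a rotated pair of generators via the $SL_2(\mathbb Z)$-action to realize an arbitrary slope and hence an arbitrary value $p+q\theta$, and then verify the idempotent and self-adjointness relations simultaneously with $\Phi(h)=h$ and $h\sigma(h)=0$.

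The main obstacle is exactly this simultaneous bookkeeping: forcing $h$ to be a genuine projection while it is flip-invariant and orthogonal to its Fourier transform, and at the same time hitting a prescribed odd-class trace. I expect the odd classes to be where the three topological genera $(2,0,0)$, $(1,1,2)$, $(0,0,2)$ of Theorem \ref{maintheorem} genuinely enter: the even classes carry only the ``cyclic'' genus $(0,0,2)$, whereas reaching the odd ones forces a contribution from the fixed-point genera $(2,0,0)$ and $(1,1,2)$, so the construction must account correctly for the behaviour of $h$ near the $\sigma$-fixed points while keeping $\psi_{10}(h+\sigma(h))=\psi_{11}(h+\sigma(h))=0$. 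Once one semicyclic $h$ of each admissible trace is secured, the value $\vartau(h)=p+q\theta$ is read off from the $U$-integral of the diagonal part of $h$, completing the proof; the companion semiflat statement over $(0,1)\cap(2\mathbb Z+2\mathbb Z\theta)$ then follows at once by passing to $f=h+\sigma(h)$.
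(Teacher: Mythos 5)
Your treatment of the even classes $t\in 2\mathbb Z+2\mathbb Z\theta$ is exactly the paper's: take a cyclic $g$ of trace $t/2<\tfrac14$ and set $h=g+\sigma^2(g)$. The problem is the odd classes, where your proposal stops being a proof and becomes a plan. You correctly note that these cannot arise by symmetrizing a cyclic projection, but the direct Powers--Rieffel-type construction you sketch (a flip-invariant symbol supported on opposite quadrants, rotated generators, simultaneous verification of idempotency, flip-invariance, and $h\sigma(h)=0$) is never carried out, and you yourself flag the ``simultaneous bookkeeping'' as the main obstacle without resolving it. As written, the odd case is a genuine gap.

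The idea you are missing is that no new construction is needed: orthogonality to the Fourier transform is inherited by subprojections. Given any target $t\in(0,\tfrac12)\cap(\mathbb Z+\mathbb Z\theta)$, pick (by density) an even-class trace $2x\in(0,\tfrac12)$ with $t<2x$, build the semicyclic $h=g+\sigma^2(g)$ of trace $2x$ as in your first step, and then take a flip-invariant subprojection $e\le h$ of trace $t$ inside the corner of the flip fixed-point algebra $A_\theta^\Phi$ (possible since the order on $K_0(A_\theta^\Phi)$ is determined by the trace). Then $e\le h$ and $\sigma(e)\le\sigma(h)$ force $e\sigma(e)=0$, so $e$ is semicyclic of trace $t$. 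This one-line cut-down replaces the entire hard half of your argument; the topological genera you invoke play no role in this theorem.
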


\begin{thm}\label{semiflattraces}
{\it Let $\theta$ be irrational.  Each number in $(0,1) \cap (2\mathbb Z + 2\mathbb Z\theta)$ is the trace of a semiflat projection in $A_\theta$.
}
\end{thm}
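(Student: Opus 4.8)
The plan is to obtain this as an immediate consequence of the preceding theorem on semicyclic projections, exploiting the defining passage $f = h + \sigma(h)$ from a semicyclic projection to its associated semiflat projection. First I would reduce the range: given $t \in (0,1) \cap (2\mathbb Z + 2\mathbb Z\theta)$, write $t = 2m + 2n\theta$ with $m,n \in \mathbb Z$ and set $s := t/2 = m + n\theta$. Then $s \in \mathbb Z + \mathbb Z\theta$, and the constraint $0 < t < 1$ is equivalent to $s \in (0,\tfrac12)$, so $s$ lies in $(0,\tfrac12) \cap (\mathbb Z + \mathbb Z\theta)$ --- exactly the range for which the theorem that every number in $(0,\tfrac12)\cap(\mathbb Z+\mathbb Z\theta)$ is the trace of a semicyclic projection applies. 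Invoking it, I obtain $h \in A_\theta$ with $h\sigma(h) = 0$, $\sigma^2(h) = h$, and $\vartau(h) = s$.

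Next I would set $f = h + \sigma(h)$ and verify that it is a genuine semiflat projection in the orbifold $A_\theta^\sigma$. Taking adjoints in $h\sigma(h)=0$, with $h$ and $\sigma(h)$ self-adjoint, gives $\sigma(h)h = 0$, whence $f^2 = h^2 + h\sigma(h) + \sigma(h)h + \sigma(h)^2 = h + \sigma(h) = f$ and $f^*=f$; moreover $\sigma(f) = \sigma(h) + \sigma^2(h) = \sigma(h) + h = f$, so $f$ is $\sigma$-invariant and hence lies in $A_\theta^\sigma$. The trace computation then closes the argument: since the unique tracial state is $\sigma$-invariant we have $\vartau(\sigma(h)) = \vartau(h)$, so $\vartau(f) = \vartau(h) + \vartau(\sigma(h)) = 2\vartau(h) = 2s = t > 0$. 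In particular $f \neq 0$, and it is a semiflat projection of trace $t$, as required.

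The hard part lies not in this reduction but is entirely absorbed into the semicyclic traces theorem, namely the construction of a flip-invariant projection of prescribed trace in $(0,\tfrac12)\cap(\mathbb Z+\mathbb Z\theta)$ that is orthogonal to its own Fourier transform. Once such an $h$ is available, the passage to $f = h + \sigma(h)$ merely doubles the trace, and the projection and $\sigma$-invariance relations follow formally from semicyclicity, so no further estimates or explicit constructions are needed.
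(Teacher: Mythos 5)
Your proposal is correct and is essentially the paper's own route: the paper proves that every number in $(0,\tfrac12)\cap(\mathbb Z+\mathbb Z\theta)$ is the trace of a semicyclic projection and then treats the semiflat statement as the immediate consequence you spell out, via $f=h+\sigma(h)$ with $\vartau(f)=2\vartau(h)$. Your verification that $f$ is a $\sigma$-invariant projection is exactly the (implicit) formal step the paper relies on.
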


The next result shows that the vanishing of all the topological invariants of a Fourier invariant projection means that it must be flat.

\begin{thm}\label{zerotopflat}
{\it Let $e$ be a Fourier invariant projection in $A_\theta$ where $\theta$ is irrational.  
If the topological invariants of $e$ vanish (i.e., $\psi_{**}(e)=0$), then $e$ is flat. 
}    
\end{thm}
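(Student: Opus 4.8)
The plan is to identify $e$ with an explicitly constructed flat projection through the K-theoretic classification, once we know its trace lies in the flat range. Since $\sigma(e)=e$, the projection $e$ lies in the fixed-point algebra $A_\theta^\sigma$ and defines a class $[e]\in K_0^+(A_\theta^\sigma)$. First I would record that flat projections have vanishing topological invariants: if $f=g+\sigma(g)+\sigma^2(g)+\sigma^3(g)$ with $g$ cyclic, then at each fixed point of the $\sigma$-action the local representation data of $f$ is a multiple of the regular representation of the isotropy group (the orbit sum distributes $g$ evenly over the characters), so every reduced invariant $\psi_{**}(f)$ vanishes. Consequently, to prove the theorem it suffices to produce a flat projection $f_0$ with $\vartau(f_0)=\vartau(e)$: then $e$ and $f_0$ have the same trace and the same (zero) topological invariants, hence the same class in $K_0(A_\theta^\sigma)$, and the comparison theory of projections in the orbifold algebra (the classification underlying Theorems \ref{identifyflat} and \ref{maintheorem}) yields a $\sigma$-invariant unitary $u$ with $ueu^*=f_0$. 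Because $u$ and $u^*$ are $\sigma$-invariant, $e=u^*f_0u$ is a $\sigma$-conjugate of a flat projection, and $\sigma$-unitary conjugation carries cyclic projections to cyclic projections and commutes with the orbit sum; hence $e$ is itself flat.

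The crux is therefore to show that $\psi_{**}(e)=0$ forces $\vartau(e)\in 4\mathbb Z+4\mathbb Z\theta$, so that Theorem \ref{flattraces} can supply $f_0$. Writing $\vartau(e)=c_0+c_1\theta$ with $c_0,c_1\in\mathbb Z$, I would invoke the fixed-point (Lefschetz/Connes--Chern) trace formula expressing $c_0$ and $c_1$ as integer combinations of the representation multiplicities of $e$ at the two order-$4$ fixed points and the order-$2$ fixed point of the $\mathbb Z_4$-action on the torus. The hypothesis $\psi_{**}(e)=0$ says precisely that at each fixed point the fiber is a multiple of the regular representation of its isotropy group; substituting this into the trace formula produces congruences forcing $c_0\equiv 0$ and $c_1\equiv 0 \pmod 4$, i.e.\ $\vartau(e)=4a+4b\theta\in 4\mathbb Z+4\mathbb Z\theta$. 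This divisibility is the flat-case analogue of the two congruences $\psi_{10}=\psi_{11}=0$ governing the semiflat case in Theorem \ref{maintheorem}.

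Finally I would assemble the pieces. If $e=0$ the statement is vacuous. The case $e=1$ does not arise: by the key step $\psi_{**}(1)=0$ would give $\vartau(1)\in 4\mathbb Z+4\mathbb Z\theta$, whereas $\vartau(1)=1\notin 4\mathbb Z+4\mathbb Z\theta$ for irrational $\theta$. For $0\neq e\neq 1$ we then have $\vartau(e)\in(0,1)\cap(4\mathbb Z+4\mathbb Z\theta)$, so Theorem \ref{flattraces} provides a flat $f_0$ of that trace, and the reduction of the first paragraph completes the argument. I expect the main obstacle to be the trace-divisibility step: pinning down the precise fixed-point trace formula and verifying that vanishing of all reduced multiplicities forces divisibility by $4$ of both the rank part $c_0$ and the Chern part $c_1$. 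The delicate point is the interplay between the two order-$4$ fixed points (which can supply a mod-$4$ constraint) and the order-$2$ fixed point (which supplies only a mod-$2$ constraint): the bookkeeping must be arranged so as to upgrade the divisibility from $2$ to $4$ and to match exactly the range appearing in Theorem \ref{flattraces}, and that is where the real work lies.
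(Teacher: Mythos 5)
Your overall strategy coincides with the paper's: show the trace must lie in $4\mathbb Z+4\mathbb Z\theta$, invoke Theorem \ref{flattraces} to produce a flat projection $f_0$ of the same trace, note that flat projections have vanishing $\psi_{**}$ so that $\T_4(e)=\T_4(f_0)$, and conclude by injectivity of $\T_4$ (plus cancellation) that $e$ is $\sigma$-unitarily conjugate to $f_0$ and hence flat. That reduction is exactly the paper's three-line proof, and your observation that $\sigma$-unitary conjugation preserves flatness is the right way to finish.

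The genuine gap is the step you yourself flag as ``where the real work lies'': you never actually prove that $\psi_{**}(e)=0$ forces $\vartau(e)\in 4\mathbb Z+4\mathbb Z\theta$. Your proposed route---a Lefschetz/fixed-point trace formula with representation multiplicities at the fixed points of the $\mathbb Z_4$-action---is only gestured at; no formula is written down, and the claimed congruences $c_0\equiv c_1\equiv 0\pmod 4$ are asserted, not derived. In the paper this divisibility is precisely Lemma \ref{traces2and4}, and it is proved not by a fixed-point formula but by an explicit lattice computation: Lemma \ref{lemma9vectors} lists nine vectors $V_1,\dots,V_9$ spanning the range of $\T_4$ on $K_0(A_\theta^\sigma)\cong\mathbb Z^9$, one writes $\T_4(e)=\sum_j N_jV_j$ with $N_j\in\mathbb Z$, and the conditions $\psi_{10}(e)=\psi_{11}(e)=0$ yield the integer relations $N_6=N_3$, $N_5=N_2$, $N_7=N_9-N_3$, $N_8=2N_2+N_3$, whence $\vartau(e)=2N_1+4N_2+2N_3+2N_4+(2N_2+2N_9)\theta$; imposing additionally $\psi_{20}(e)=\psi_{21}(e)=\psi_{22}(e)=0$ then forces this into $4\mathbb Z+4\mathbb Z\theta$. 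Without some such explicit description of the image lattice of $\T_4$ (which encodes the same information your fixed-point heuristic appeals to, but in verifiable form), the mod-$4$ upgrade you need cannot be checked, so your argument is incomplete at its crux. A smaller quibble: your justification that flat projections have vanishing invariants (``multiple of the regular representation at each fixed point'') is also heuristic; the clean argument uses the twisted-trace identity $\psi(xy)=\psi(\sigma(y)x)$ together with cyclicity of $g$ to get $\psi(\sigma^j(g))=\psi(\sigma^{j+1}(g)\sigma^j(g))=0$.
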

\begin{proof}
By Lemma \ref{traces2and4} the trace of $e$ has to be in $4\mathbb Z + 4\mathbb Z\theta$. By Theorem \ref{flattraces}, $\vartau(e)$ would also be the trace of some flat projection $f$. Since the Connes-Chern character invariant $\T_4$ (mentioned below) of $e$ and $f$ are equal, they are unitarily equivalent by a $\sigma$-invariant unitary, and hence $e$ is flat also.
\end{proof}

This characterizes what one might call the flat positive cone $K_0^{+F}(A_\theta^\sigma)$, the nonzero classes in the positive cone $K_0^+(A_\theta^\sigma)$ represented by flat projections in $A_\theta^\sigma$.

We also obtain a result that identifies cyclic and flat projections up to Fourier invariant unitary equivalence simply by means of the trace.

\begin{thm}\label{identifyflat}
{\it Let $\theta$ be any irrational number, and let $g_1$ and $g_2$ be two cylic projections in $A_\theta$.
\begin{enumerate}
\item Then $g_1$ and $g_2$ are $\sigma$-unitarily equivalent iff $g_1$ and $g_2$ have the same trace.
\item Two flat projections $f_1=\sigma^*(g_1)$ and $f_2 = \sigma^*(g_2)$ are $\sigma$-unitarily equivalent iff they have the same trace iff $g_1$ and $g_2$ $\sigma$-unitarily equivalent.
\end{enumerate}
}
\end{thm}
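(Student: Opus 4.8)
The plan is to reduce the entire triangle of equivalences to a single statement about flat projections and then transport it to the cyclic projections by an orbit-sum construction. First I would dispose of the forward implications, which are immediate. Any $\sigma$-invariant unitary is in particular a unitary, so $\sigma$-unitary equivalence preserves the unique (and $\sigma$-invariant) trace $\vartau$; thus $g_1 \sim_\sigma g_2$ forces $\vartau(g_1) = \vartau(g_2)$, and likewise for $f_1, f_2$. Moreover, if $u$ is $\sigma$-invariant with $u g_1 u^* = g_2$, then applying $\sigma^i$ gives $u\,\sigma^i(g_1)\,u^* = \sigma^i(g_2)$, so summing over the orbit yields $u f_1 u^* = f_2$. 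Hence in both (1) and (2) only the reverse direction remains: equal trace must force $\sigma$-unitary equivalence.

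The crux is the claim I would isolate first: \emph{two flat projections of equal trace are $\sigma$-unitarily equivalent.} The argument mirrors the proof of Theorem \ref{zerotopflat}. A flat projection $f = \sigma^*(g)$ is an orthogonal sum over a free $\sigma$-orbit, so all of its topological invariants $\psi_{**}$ vanish; consequently its full Connes--Chern character $\T_4(f)$ is pinned down by $\vartau(f)$ alone. Therefore $\vartau(f_1) = \vartau(f_2)$ gives $\T_4(f_1) = \T_4(f_2)$, and the completeness of $\T_4$ as an invariant of Fourier-invariant projections (the same input invoked in Theorem \ref{zerotopflat}) produces a $\sigma$-invariant unitary $w$ with $w f_1 w^* = f_2$, hence also $w(1-f_1)w^* = 1-f_2$.

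With the claim in hand I would upgrade to the cyclic projections by an explicit orbit sum. Since $\vartau(g_1) = \tfrac14\vartau(f_1) = \tfrac14\vartau(f_2) = \vartau(g_2)$, the projections $g_1, g_2$ carry the same class in $K_0(A_\theta) \cong \mathbb Z + \mathbb Z\theta$, and as $A_\theta$ has stable rank one they are Murray--von Neumann equivalent; fix $v \in A_\theta$ with $v^*v = g_1$ and $vv^* = g_2$. Then set
\[
u \;=\; \sum_{i=0}^{3} \sigma^i(v) \;+\; w\,(1-f_1).
\]
Because $\sigma^i(g_1)$ (resp.\ $\sigma^i(g_2)$) are mutually orthogonal, the summands $\sigma^i(v)$ have orthogonal initial and final spaces, so $\sum_i \sigma^i(v)$ is a partial isometry from $f_1$ to $f_2$, and it is $\sigma$-invariant by reindexing the sum. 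Adjoining the $\sigma$-invariant partial isometry $w(1-f_1)$ from $1-f_1$ to $1-f_2$ yields a $\sigma$-invariant unitary $u$, and the orthogonality of the orbit collapses all cross terms to leave $u g_1 u^* = v v^* = g_2$. This gives $g_1 \sim_\sigma g_2$, establishing the reverse direction of (1); part (2) then follows at once by combining (1) with the claim.

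The main obstacle is the claim about flat projections, and inside it the two imported facts: that flatness forces the vanishing of the topological invariants, and that $\T_4$ is a complete $\sigma$-unitary invariant of Fourier-invariant projections. Once these are granted (exactly as in Theorem \ref{zerotopflat}), the rest is the elementary orbit-sum construction above; its only delicate point is the vanishing of the off-diagonal terms $\sigma^i(v)\,g_1\,\sigma^j(v)^*$ for $(i,j)\ne(0,0)$, which is immediate from the mutual orthogonality of $g_1, \sigma(g_1), \sigma^2(g_1), \sigma^3(g_1)$.
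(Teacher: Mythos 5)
Your proposal is correct and follows essentially the same route as the paper: reduce to the statement that flat projections of equal trace are $\sigma$-unitarily equivalent via the vanishing of the $\psi_{**}$ invariants and the completeness of $\T_4$, then transfer to the cyclic projections by the orbit sum $\sum_i \sigma^i(v)$ of a Murray--von Neumann partial isometry, completed to a $\sigma$-invariant unitary on the complement. The only (harmless) cosmetic difference is that you build the complementary partial isometry explicitly as $w(1-f_1)$ from the unitary $w$ already in hand, whereas the paper invokes a separate $\sigma$-invariant partial isometry between $1-f_1$ and $1-f_2$.
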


We also have the corresponding result for semiflat projections and their semicyclic components.

\begin{thm}\label{identifysemiflat}
{\it Let $\theta$ be any irrational number, and let $g$ and $h$ be two semicyclic projections in $A_\theta^\Phi$.
\begin{enumerate}
\item Then $g$ and $h$ are $\sigma$-unitarily equivalent iff they are $\Phi$-unitarily equivalent iff  $\T_2(g) = \T_2(h)$.
\item Two semiflat projections $f=g+\sigma(g)$ and $f' = h + \sigma(h)$ are $\sigma$-unitarily equivalent iff $\vartau(g) = \vartau(h)$ and
\[
\phi_{00}(g) = \phi_{00}(h), \qquad \phi_{11}(g) = \phi_{11}(h), \qquad
\phi_{01}(g) + \phi_{10}(g) = \phi_{01}(h) + \phi_{10}(h).
\]
\end{enumerate}
}
\end{thm}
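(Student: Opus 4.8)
The plan is to prove part (1) through the cycle of implications $\sigma$-unitary equivalence $\Rightarrow$ $\Phi$-unitary equivalence $\Rightarrow$ $\T_2(g)=\T_2(h)$ $\Rightarrow$ $\sigma$-unitary equivalence, and then to derive part (2) by computing the order-four Connes--Chern character $\T_4$ of a semiflat projection $f=g+\sigma(g)$ purely in terms of the four flip invariants $\vartau(g)$, $\phi_{00}(g)$, $\phi_{11}(g)$, and $\phi_{01}(g)+\phi_{10}(g)$ of its semicyclic component, after which the classification of semiflat projections by $\T_4$ finishes the argument. For part (1) the implication $\sigma\Rightarrow\Phi$ is immediate, since any $\sigma$-invariant unitary is automatically $\Phi=\sigma^2$-invariant, and $\Phi\Rightarrow\T_2(g)=\T_2(h)$ holds because $\T_2$ is constant on $\Phi$-unitary orbits. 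The reverse implication, from $\T_2(g)=\T_2(h)$ to $\Phi$-unitary equivalence, I would take from the flip-orbifold classification (\cite{BEEKa,SWcmp}): $A_\theta^\Phi$ has real rank zero and cancellation, and $\T_2$ is injective on $K_0(A_\theta^\Phi)$, so equal $\T_2$ forces equal $K_0$-class and hence unitary equivalence inside $A_\theta^\Phi$, which is exactly $\Phi$-unitary equivalence.

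The one genuinely new step in (1) is $\Phi\Rightarrow\sigma$, which I would prove by a \emph{doubling} construction. Given a $\Phi$-invariant unitary $w$ with $wgw^*=h$, set $v_1=wg$, $v_2=\sigma(w)\sigma(g)$, and $v=v_1+v_2$. Using $\sigma^2(w)=\Phi(w)=w$ and $\sigma^2(g)=g$ one checks $\sigma(v)=v$, so $v\in A_\theta^\sigma$. The initial projections $v_1^*v_1=g$ and $v_2^*v_2=\sigma(g)$ are orthogonal, and the final projections $v_1v_1^*=h$ and $v_2v_2^*=\sigma(h)$ are orthogonal by semicyclicity, so $v$ is a $\sigma$-invariant partial isometry with $v^*v=g+\sigma(g)=f$ and $vv^*=h+\sigma(h)=f'$; moreover $vgv^*=wgw^*=h$. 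It remains to extend $v$ to a $\sigma$-invariant unitary: since $f\sim_\sigma f'$ and $A_\theta^\sigma$ has cancellation, the identity $[1-f]=[1-f']$ in $K_0(A_\theta^\sigma)$ yields a $\sigma$-invariant partial isometry $v'$ from $1-f$ to $1-f'$, and $u=v+v'$ is a $\sigma$-invariant unitary with $ugu^*=h$. This closes (1).

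For part (2) I would first record the action of $\sigma$ on the flip data. Because $\sigma$ commutes with $\Phi$ it permutes the four fixed points of the flip, acting as the quarter-turn that fixes $P_{00}$ and $P_{11}$ and interchanges $P_{01}$ and $P_{10}$; correspondingly
\[
\phi_{00}(\sigma(g))=\phi_{00}(g),\quad \phi_{11}(\sigma(g))=\phi_{11}(g),\quad \phi_{01}(\sigma(g))=\phi_{10}(g),\quad \phi_{10}(\sigma(g))=\phi_{01}(g),
\]
together with $\vartau(\sigma(g))=\vartau(g)$. Additivity of these functionals on the orthogonal sum $f=g+\sigma(g)$ then gives $\vartau(f)=2\vartau(g)$, $\phi_{00}(f)=2\phi_{00}(g)$, $\phi_{11}(f)=2\phi_{11}(g)$, and $\phi_{01}(f)=\phi_{10}(f)=\phi_{01}(g)+\phi_{10}(g)$, so that $\T_4(f)$, assembled from the $\sigma$-symmetric part of these, is an injective function of the quadruple $(\vartau(g),\phi_{00}(g),\phi_{11}(g),\phi_{01}(g)+\phi_{10}(g))$. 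The forward direction of (2) now follows: if $f\sim_\sigma f'$ then $\T_4(f)=\T_4(f')$, which forces the four stated equalities. For the backward direction the four equalities give $\T_4(f)=\T_4(f')$, and since $\T_4$ is a complete invariant for $\sigma$-unitary equivalence of semiflat projections, the classification underlying Theorem \ref{maintheorem}, we conclude $f\sim_\sigma f'$. Note that here doubling is \emph{not} available: the condition $\phi_{01}(g)+\phi_{10}(g)=\phi_{01}(h)+\phi_{10}(h)$ need not force $g\sim_\Phi h$ or $g\sim_\Phi\sigma(h)$, so $g$ and $h$ may fail to be $\Phi$-equivalent even though $f$ and $f'$ are $\sigma$-equivalent.

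The main obstacle lies in the two classification inputs rather than in the bookkeeping: the injectivity of $\T_2$ on $K_0(A_\theta^\Phi)$ and of $\T_4$ on semiflat classes in $A_\theta^\sigma$, which rest on the real-rank-zero and cancellation properties of these orbifold algebras together with the explicit Connes--Chern computations. A secondary technical point is verifying the transformation rule $\phi_{jk}(\sigma(g))=\phi_{\pi(jk)}(g)$ and the assembly of $\T_4(f)$ from the flip data with the correct normalizations, i.e. the $\sigma$-naturality of the underlying Connes--Chern cocycles.
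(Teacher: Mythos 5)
Your proof is correct and follows essentially the same route as the paper: part (1) rests on the same doubling $u+\sigma(u)$ of a $\Phi$-invariant partial isometry (your $v=wg+\sigma(wg)$ is exactly this with $u=wg$) followed by the cancellation argument on the complements $1-f$, $1-f'$, and part (2) on the injectivity of $\T_4$ together with the relations $\psi_{20}=\phi_{00}$, $\psi_{21}=\phi_{11}$, $\psi_{22}=\phi_{01}+\phi_{10}$. The only point you (and the paper) leave implicit is that $\psi_{10}(f)=\psi_{11}(f)=0$ for every semiflat $f=h+\sigma(h)$ --- immediate from the twisted-trace identity $\psi_{1k}(h)=\psi_{1k}(h\cdot h)=\psi_{1k}(\sigma(h)h)=0$ --- which is what guarantees that your quadruple of flip invariants really determines all of $\T_4(f)$.
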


\medskip

Lastly, we show that all possible trace values are realized by Fourier invariant projections.

\begin{thm} (See Theorem \ref{TracesFourierInvariant}.)
{\it Let $\theta$ be irrational. Each number in $(0,1) \cap (\mathbb Z + \mathbb Z\theta)$ is the trace of a Fourier invariant projection in $A_\theta$.
}
\end{thm}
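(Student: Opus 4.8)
The plan is to prove that the trace \emph{surjects} onto $(0,1)\cap(\mathbb Z+\mathbb Z\theta)$; the reverse inclusion is automatic, since a Fourier invariant projection is in particular a projection in $A_\theta$, whose trace already lies in $(0,1)\cap(\mathbb Z+\mathbb Z\theta)$ by Pimsner--Voiculescu and Rieffel. I would organize the target set by the four cosets of $2\mathbb Z+2\mathbb Z\theta$ inside $\mathbb Z+\mathbb Z\theta$. The trivial coset requires no work: every value in $(0,1)\cap(2\mathbb Z+2\mathbb Z\theta)$ is realized by a semiflat projection $h+\sigma(h)$, which is Fourier invariant because $\sigma^2(h)=h$, by Theorem \ref{semiflattraces}. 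The whole content is therefore to hit the three nontrivial parity classes, represented for instance by $\theta$, $1-\theta$, and $1-2\theta$ (for $\theta<\tfrac12$; an analogous triple otherwise). It is worth recording that $(0,1)\cap(\mathbb Z+\mathbb Z\theta)=\{\,\{n\theta\}:n\in\mathbb Z\setminus\{0\}\,\}$, so each of these classes contains positive values of arbitrarily small size.

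To supply base projections in the odd classes I would use functional calculus on a $\sigma$-fixed self-adjoint element. Since $\sigma$ sends $U\to V^{-1}\to U^{-1}\to V\to U$, the Harper element $h=U+U^{-1}+V+V^{-1}$ satisfies $\sigma(h)=h$, so each spectral projection $\chi_{(-\infty,\lambda]}(h)$ with $\lambda$ outside the spectrum is a genuine projection in $A_\theta$ (continuous functional calculus across a spectral gap) and is automatically Fourier invariant. Its trace is the integrated density of states at $\lambda$, which the gap-labelling theorem places in $(\mathbb Z+\mathbb Z\theta)\cap[0,1]$ with gap labels of the form $\{n\theta\}$. This already realizes invariant projections in every parity class and of arbitrarily small trace; indeed, if one is willing to invoke the solution of the Ten Martini problem (all gaps open), every value $\{n\theta\}$ is attained, which finishes the proof outright. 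To stay within the present circle of ideas I would instead extract from this construction (or from an equivariant adaptation of Rieffel's continued-fraction projection) only one small base projection $q_C$ in each nontrivial coset $C$.

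The assembly then runs as follows: given a target $t=m+n\theta\in(0,1)$ in coset $C$, choose $q_C$ with $\vartau(q_C)<t$, so that $t-\vartau(q_C)\in(0,1)\cap(2\mathbb Z+2\mathbb Z\theta)$ is the trace of a semiflat projection $f$ by Theorem \ref{semiflattraces}; replacing $f$ by an orthogonal copy inside the corner $(1-q_C)A_\theta^\sigma(1-q_C)$ makes $q_C+f$ a Fourier invariant projection of trace exactly $t$. The main obstacle is precisely this last step. Because $K_0(A_\theta^\sigma)\cong\mathbb Z^9$, the trace does not determine a class, so realizing $f$ orthogonally to $q_C$ demands comparison and cancellation in $A_\theta^\sigma$ together with control of the genus invariants $\psi_{**}$ (and the Connes--Chern data $\T$) guaranteeing $[f]\le[1-q_C]$ in the positive cone. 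This is exactly where the classification machinery of the paper is needed, namely the Main Theorem \ref{maintheorem} and the $\sigma$-unitary classification of Theorem \ref{identifysemiflat}, to ensure that a semiflat projection of the required trace actually sits inside the corner and that the orthogonal sum stays in $K_0^+$. The secondary difficulty is producing the odd-coset base projections with a cofinal-downward supply of small traces without appealing to Ten Martini, for which the equivariant Rieffel-type construction is the natural route.
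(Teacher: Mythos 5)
Your reduction is sound in outline: splitting $(0,1)\cap(\mathbb Z+\mathbb Z\theta)$ into cosets mod $2\mathbb Z+2\mathbb Z\theta$, filling the even coset with semiflat projections via Theorem \ref{semiflattraces}, and assembling $q_C+f$ orthogonally is legitimate. In fact the assembly step is easier than you fear: since the order on $K_0(A_\theta^\sigma)$ is determined by the canonical trace, Lemma \ref{lemma1} already lets you place a $\sigma$-invariant copy of $f$ under $1-q_C$ purely from the trace inequality; no control of the genus invariants $\psi_{**}$ or appeal to Theorem \ref{maintheorem} is needed. The genuine gap is where you yourself flag the ``secondary difficulty'': you never actually produce the base projections in the three nontrivial cosets. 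The Harper element $U+U^{-1}+V+V^{-1}$ is indeed $\sigma$-fixed and its gap projections are Fourier invariant with traces of the form $\{n\theta\}$, but the Ten Martini theorem (Avila--Jitomirskaya) asserts only that the spectrum is a Cantor set; it does \emph{not} assert that all gaps are open. That stronger statement is the ``dry'' Ten Martini problem, which at critical coupling (exactly the case of the Harper element) was not available, and even Cantor spectrum tells you nothing about \emph{which} labels $\{n\theta\}$ are realized, so you cannot guarantee hitting all three odd cosets, let alone with arbitrarily small traces. The ``equivariant adaptation of Rieffel's projection'' is named but not carried out, so the proof is incomplete at its essential point.

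The paper closes this gap by an entirely different mechanism. It imports from \cite{SWcrelles} (Theorem 1.1) a Fourier invariant projection of trace $\theta$, and then observes (Lemma \ref{sumsquares}) that the unitaries $\widetilde U = e(\tfrac12 mn\theta)V^{-n}U^{m}$, $\widetilde V = e(\tfrac12 mn\theta)U^{n}V^{m}$ generate a copy of $A_{\theta'}$ with $\theta'=(m^2+n^2)\theta \bmod 1$ on which $\sigma$ restricts to the Fourier transform, yielding Fourier invariant projections of trace $(m^2+n^2)\theta \bmod 1$ for every $m,n$. Lagrange's four-square theorem then writes an arbitrary target $m\theta-n$ as a combination of two such traces, with the leftover integer $k\in\{0,1\}$ handled either by Lemma \ref{lemma1} (orthogonal sum) or by comparing $[e]$ with $[1-f]$ and subtracting. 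If you want to salvage your coset strategy, you would need a substitute for this number-theoretic step --- some concrete equivariant construction giving arbitrarily small Fourier invariant traces in each odd coset --- and that is precisely the content you have deferred.
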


\textcolor{blue}{\large \section{Topological Invariants}}

In this section we recall the topological invariants for the flip and the Fourier transform associated with the ``twisted" unbounded traces on the canonical smooth dense *-subalgebra $A_\theta^\infty$.

\medskip

The flip automorphism $\Phi$ has associated unbounded $\Phi$-traces defined on the basic unitaries $U^mV^n$ by 
\begin{equation}
\phi_{ij}(U^mV^n)\ =\ e(-\tfrac{\theta}2 mn)\,\updelta_2^{m-i} \updelta_2^{n-j}
\end{equation}
for $ij = 00, 01,10,11$,  $m,n\in \mathbb Z$, where $\updelta_a^{b}$ is the divisor delta function defined to be 1 when $a$ divides $b$, and 0 otherwise.
(See \ccite{SWprojmodules} or \ccite{SWcmp}.) These are (unbounded) linear functionals defined on the canonical smooth dense *-subalgebra $A_\theta^\infty$ which are $\Phi$-invariant and satisfy the $\Phi$-trace condition
\[
\phi_{ij}(xy) = \phi_{ij}(\Phi(y)x)
\]
for all $x,y$ in $A_\theta^\infty$. In addition, they are Hermitian maps: they are real on Hermitian elements.  Clearly, on the fixed point subalgebra $A_\theta^{\infty,\Phi}$ of $A_\theta^\infty$ under the flip they give rise to (unbounded) trace functionals. Together with the canonical trace $\vartau$ one has the Connes-Chern character  
\begin{equation} 
\T_2 : K_0(A_\theta^\Phi) \to \mathbb R^5, \qquad
\T_2(x) = (\vartau(x); \phi_{00}(x), \phi_{01}(x), \phi_{10}(x), \phi_{11}(x))
\end{equation}
the injectivity of which was shown in \ccite{SWprojmodules} (Proposition 3.2) for irrational $\theta$.\footnote{In \ccite{SWprojmodules} we worked with the crossed product algebra $A_\theta \rtimes_\Phi \mathbb Z_2$, but since this is strongly Morita equivalent to the fixed point algebra, the injectivity follows.} We may sometimes refer to $\T_2(x)$, or simply the $\phi_{ij}(x)$, as the $\Phi$-topological invariant(s) of the class. For the identity element one has $\T_2(1) = (1; 1, 0, 0, 0)$.

\medskip

For the Fourier transform $\sigma$, one has five basic unbounded twisted trace functionals defined on the canonical smooth *-subalgebra $A_\theta^\infty$ of $A_\theta$, defined as follows on generic unitary elements:
\begin{align}\label{unboundedtraces}
\psi_{10}(U^mV^n) &= e(-\tfrac\theta4(m+n)^2)\,\updelta^{m-n}_2, &
\psi_{20}(U^mV^n) &= e(-\tfrac\theta2 mn)\,\updelta^m_2\updelta^n_2, 
 \\ 
\psi_{11}(U^mV^n) &= e(-\tfrac\theta4(m+n)^2)\,\updelta^{m-n-1}_2, &
\psi_{21}(U^mV^n) &= e(-\tfrac\theta2 mn) \,\updelta^{m-1}_2 \updelta^{n-1}_2,
 \\
& \ \ & 
\psi_{22}(U^mV^n) &= e(-\tfrac\theta2 mn)\,\updelta^{m-n-1}_2.
\end{align}
(See \ccite{SWChern}\footnote{In \ccite{SWChern} our Fourier transform was the inverse of the one used in this paper, so the unbounded traces in \ccite{SWChern} are ``conjugate" to those above. See also the proof of Lemma \ref{lemma9vectors}.}.) These maps were calculated in \ccite{SWChern} and were used in \ccite{SWcjm}, \ccite{SWcrelles}. The Fourier Connes-Chern character is the group homomorphism
\[
\T_4: K_0(A_\theta^\sigma) \to \mathbb C^6, \qquad
\T_4(x) = (\vartau(x); \ \psi_{10}(x), \psi_{11}(x);\ \psi_{20}(x), \psi_{21}(x), \psi_{22}(x))
\]
where $\vartau$ is the canonical trace on $A_\theta^\sigma$, the (orbifold) fixed point subalgebra with respect to the Fourier transform. For irrational $\theta$ the map $\T_4$ is injective, so defines a complete invariant for projections in the fixed point algebra $A_\theta^\sigma$ (up to Fourier invariant unitary equivalence).\footnote{When $\theta$ is rational, one needs to include the Chern number arising from Connes' cyclic 2-cocycle to the $\T_4$ invariant to ensure injectivity -- however, for our purposes, this is not necessary.} For the identity one has $\T_4(1) = (1; 1,0; 1, 0, 0)$.

\medskip

\begin{dfn}\label{toptype}
By the {\it topological genus} (or simply {\it genus}) of a semiflat projection $f$ we mean the triple $(\psi_{20}(f), \psi_{21}(f), \psi_{22}(f))$.
\end{dfn}

The injectivity of the Fourier Connes-Chern map $\T_4$ was shown in  \ccite{SWcjm} for a dense $G_\updelta$ set of $\theta$'s, but later it was shown in \ccite{AP}, and independently in \ccite{ELPW}, that $K_0(A_\theta^\sigma) \cong \mathbb Z^9$ for all $\theta$, which gives the injectivity of $\T_4$ for all irrational $\theta$.  This allows us to conclude that since $A_\theta^\sigma$ has the cancellation property for any irrational $\theta$, two projections $e$ and $e'$ in $A_\theta^\sigma$ are $\sigma$-unitarily equivalent if and only if $\T_4(e)=\T_4(e')$. 

One easily checks the following relations between the $\Phi$ and $\sigma$ unbounded traces
\begin{equation}\label{phiandpsi}
\psi_{20} = \phi_{00}, \qquad \psi_{21} = \phi_{11}, \qquad
\psi_{22} = \phi_{01} + \phi_{10}.
\end{equation} 
We will need to use the parity automorphism $\gamma$ of $A_\theta$ defined by
\[
\gamma(U) = -U, \qquad \gamma(V) = -V
\]
which will be useful because it commutes with the Fourier transform and has the property of switching the signs of the topological maps $\psi_{11},\psi_{22}$ (while preserving the others). It also has the useful property 
\begin{equation}\label{phigamma}
\phi_{00}\gamma = \phi_{00}, \qquad \phi_{11}\gamma = \phi_{11}, \qquad
\phi_{01}\gamma = -\phi_{01}, \qquad \phi_{10}\gamma = -\phi_{10}.
\end{equation}

\medskip

The topological numbers of $\sigma$-invariant projections are quantized. Indeed, in view of \ccite{SWChern} and \ccite{SWcjm}, the $\psi_{10}, \psi_{11}$ invariants of such projections take values in the lattice subgroup $\mathbb Z + \mathbb Z(\frac{1-i}{2})$ of $\mathbb C$; the $\psi_{20}, \psi_{21}$ invariants take values in $\frac{1}{2}\mathbb Z$, and $\psi_{22}$ in $\mathbb Z$. 

\medskip

Analogous results can probably be established for the Cubic and Hexic transforms studied in \ccite{BW} and \ccite{SWnuclearphysicsb}. For example, for the Hexic transform $\rho$ (the canonical order 6 automorphism), there are three kinds of `flat' projections: $g+\rho(g)$ (where $g$ is $\rho^2$-invariant), $g+\rho(g)+\rho^2(g)$ (where $g$ is $\rho^3$-invariant, $\rho^3$ being the flip), and $g+\rho(g)+ \dots + \rho^5(g)$ (where $g$ is $\rho$-cyclic).

\textcolor{blue}{\large \section{Density of Topological Types}}

In this section we establish key lemmas needed for the proof of the main theorem. 
For the reader's convenience, we quote the part of Lemma  3.1 from \ccite{SWprojmodules} (p. 594) that is relevant to the proofs below.

\medskip

\begin{lem}\label{lemmaSWprojmodules}
{\it Let $\alpha = r\theta+s$ be irrational in the interval $(\frac12, 1)$ where $r,s$ are integers. With $U^r$ and $V$ being unitaries satisfying $VU^r= e^{2\pi i\alpha} U^rV$, there exists a Powers-Rieffel projection 
\[
e = Vg(U^r) + f(U^r) + g(U^r)V^{-1}
\]
of trace $\alpha$ that is flip-invariant, where $f,g$ are certain smooth functions. 
Further, if $r$ is even, then
\[
\phi_{ij}(f(U^r)) = 0, \qquad
\phi_{ij}(g(U^r)V^{-1}) = \begin{cases} 0 &\text{ if $s$ is even,} 
\\ \frac12 \updelta_2^{j-1}\updelta_2^{i}  &\text{ if $s$ is odd.} \end{cases} 
\]
If $r$ is odd, one has
\[
\phi_{ij}(f(U^r)) = \tfrac12(-1)^i \updelta_2^j, \qquad
\phi_{ij}(g(U^r)V^{-1}) = \tfrac14(-1)^{i(s+1)} \updelta_2^{j-1}.
\]
}
\end{lem}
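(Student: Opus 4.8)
The plan is to build $e$ as a Powers--Rieffel projection for the sub-pair $(W,V)$, where $W:=U^r$ satisfies $VW = e^{2\pi i\alpha}WV$ and hence generates the rotation relation at angle $\alpha$. Writing $\lambda = e^{2\pi i\alpha}$ and using the intertwining $V\,h(W) = h(\lambda W)\,V$, I would insert the ansatz $e = Vg(W) + f(W) + g(W)V^{-1}$, with $f,g$ real-valued smooth functions on the circle, into $e=e^2$ and collect the powers $V^{\pm 2},V^{\pm1},V^0$. This produces the three Rieffel conditions: the disjoint-support identity $g(W)\,g(\lambda^{-1}W)=0$ (from $V^{\pm2}$), the matching identity $g(W)\bigl(f(W)+f(\lambda^{-1}W)-1\bigr)=0$ (from $V^{\pm1}$), and $f-f^2 = g^2(W)+g^2(\lambda W)$ (from $V^0$); reality of $f,g$ gives $e=e^*$. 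Identifying $W=e^{2\pi i t}$ turns multiplication by $\lambda$ into the rotation $t\mapsto t+\alpha$, so these become the familiar pointwise conditions on $[0,1)$, and $g^2=f-f^2$ on $\mathrm{supp}\,g$ forces $0\le g\le\tfrac12$ with $g=\tfrac12$ exactly where $f=\tfrac12$ and $g=0$ where $f\in\{0,1\}$.

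Second, I would pin down $f,g$ so that $e$ is flip-invariant of trace $\alpha$. Since $\Phi$ sends $W\mapsto W^{-1}$ (i.e. $t\mapsto -t$) and $V\mapsto V^{-1}$, rewriting $\Phi(e)$ in standard form shows that $\Phi(e)=e$ forces $f$ to be even and $g$ to be symmetric about $t=\tfrac\alpha2$, i.e. $g(\alpha-t)=g(t)$. The key observation is that $b:=\tfrac{1+\alpha}2=\tfrac\alpha2+\tfrac12$ is a fixed point of the reflection $t\mapsto\alpha-t \pmod 1$; hence $g$ may be supported on a single short interval centered at $b$ and be automatically flip-symmetric. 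Accordingly I would take $f$ to be an even bump of integral $\alpha$ (possible since $\alpha\in(\tfrac12,1)$), equal to $1$ on a central plateau about $t=\tfrac12$, with antisymmetric transitions about $a:=\tfrac{1-\alpha}2$ and $b$, and set $g=\sqrt{f-f^2}$ on the $b$-transition (the $a$-transition of $f-f^2$ being supplied by the shifted term $g^2(\lambda W)$). This records the normalizations that drive everything: $f=0$ at $t=0$, $f=1$ at $t=\tfrac12$, $g=\tfrac12$ at $t=b$, and $g\equiv0$ on the plateau, while $\vartau(e)=\int_0^1 f = \alpha$ because $\vartau$ kills the $V^{\pm1}$ parts.

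Third, I would read off the flip-traces from the Fourier expansions $f=\sum_k c_k e^{2\pi i kt}$ and $g=\sum_k c'_k e^{2\pi i kt}$, so that $f(U^r)=\sum_k c_k U^{rk}$ and $g(U^r)=\sum_k c'_k U^{rk}$, together with the monomial values $\phi_{ij}(U^{rk}) = \updelta_2^{rk-i}\updelta_2^{j}$ and $\phi_{ij}(U^{rk}V^{-1}) = e(\tfrac\theta2 rk)\,\updelta_2^{rk-i}\updelta_2^{j-1}$. The parity of $r$ decides whether $\updelta_2^{rk-i}$ collapses to $\updelta_2^{i}$ (for $r$ even) or selects the class $k\equiv i \pmod 2$ (for $r$ odd). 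For $f(U^r)$ the sums reduce to the values of $f$ at $t=0$ and $t=\tfrac12$, giving $0$ when $r$ is even and $\tfrac12(-1)^i\updelta_2^j$ when $r$ is odd. For $g(U^r)V^{-1}$ the phase $e(\tfrac\theta2 rk)$ reassembles the series into values of $g$ at $t=\tfrac{r\theta}2$ and at $t=\tfrac{r\theta}2+\tfrac12$; since $r\theta=\alpha-s$, these reduce modulo $1$ to $\tfrac\alpha2-\tfrac s2$ and its half-shift, so the parity of $s$ toggles the argument between the plateau point $\tfrac\alpha2$ (where $g=0$) and the transition point $b=\tfrac\alpha2+\tfrac12$ (where $g=\tfrac12$). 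Tabulating the four parity combinations then yields the stated values, with the signs $(-1)^i$ and $(-1)^{i(s+1)}$ emerging from the $k\equiv i$ splitting in the odd-$r$ case.

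The main obstacle is the simultaneous bookkeeping in the second and third steps: one must choose the transition profiles antisymmetrically enough that flip-invariance, the support identity $g(W)g(\lambda^{-1}W)=0$, and the covering of $f-f^2$ on the far ($a$-)ramp by the $\lambda$-shift all hold at once, and then verify that the evaluation points $\tfrac\alpha2-\tfrac s2$ and $\tfrac\alpha2-\tfrac s2+\tfrac12$ land precisely on the plateau or on the transition center for each parity of $s$. Everything hinges on the coincidence that $b=\tfrac\alpha2+\tfrac12$ is at once the flip-fixed point carrying $g$ and the point where $f=\tfrac12$, $g=\tfrac12$; once that is in place, the four cases and their sign factors follow by routine computation.
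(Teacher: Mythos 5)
Your construction is correct: the three Rieffel identities, the flip conditions $f(-t)=f(t)$ and $g(\alpha-t)=g(t)$, the choice of $b=\tfrac{1+\alpha}{2}$ as the flip-fixed carrier of $g$, and the Fourier-coefficient evaluation of $\phi_{ij}$ at $t=0,\tfrac12$ (for $f(U^r)$) and at $t=\tfrac{r\theta}{2},\tfrac{r\theta}{2}+\tfrac12$ (for $g(U^r)V^{-1}$) all check out and reproduce exactly the stated values, including the signs $(-1)^i$ and $(-1)^{i(s+1)}$. Note, however, that this paper does not prove the lemma at all --- it is quoted verbatim (with $p=q=0$) from Lemma 3.1 of the cited reference \cite{SWprojmodules}, and your argument is essentially a faithful reconstruction of the construction given there, so there is nothing in the present paper to diverge from.
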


(N.B., this slightly more simplified version of the lemma was obtained by setting ``$p=q=0$" in the notation of Lemma  3.1 of \ccite{SWprojmodules}.)

\medskip

\begin{lem}\label{ees}
{\it Let $\theta$ be irrational. There are flip-invariant Powers-Rieffel projections $e, e', e''$ in $A_\theta$ with $\Phi$-invariants
\[
\T_2(e) = (\vartau(e); 0,1,0,0), \qquad  
\T_2(e') = (\vartau(e'); \tfrac12,\tfrac12,\tfrac12,\tfrac12), \qquad \T_2(e'') = (\vartau(e''); 1,0,0,0)
\]
such that the set of traces of each type is dense in $(0,\frac12)$.
}
\end{lem}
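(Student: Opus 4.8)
The plan is to build all three projections from the flip-invariant Powers--Rieffel projections of Lemma \ref{lemmaSWprojmodules}, compute their $\Phi$-invariants $\T_2$ directly, and then correct signs and trace values by parity automorphisms, orthogonal sums, complements, and equidistribution. First I would fix integers $r,s$ with $\alpha = r\theta+s\in(\tfrac12,1)$ and take $e_{r,s}=Vg(U^r)+f(U^r)+g(U^r)V^{-1}$. Since $\vartau$ annihilates the two off-diagonal summands, only $f(U^r)$ carries the trace $\alpha$. For the $\phi_{ij}$ the key observation is that the two off-diagonal summands contribute \emph{equally}: from $VU^r=e^{2\pi i r\theta}U^rV$ one gets the same phase $e(\tfrac\theta2 rk)$ and the same parity factor $\updelta_2^{rk-i}\updelta_2^{j-1}$ in both $\phi_{ij}(Vg(U^r))$ and $\phi_{ij}(g(U^r)V^{-1})$ (this also follows from the $\Phi$-trace identity $\phi_{ij}(Vg(U^r))=\phi_{ij}(g(U^{-r})V)$ together with $\updelta_2^{-rk-i}=\updelta_2^{rk-i}$). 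Hence
\[
\phi_{ij}(e_{r,s}) = \phi_{ij}(f(U^r)) + 2\,\phi_{ij}(g(U^r)V^{-1}),
\]
and the explicit formulas of Lemma \ref{lemmaSWprojmodules} give, by the parities of $(r,s)$, the four vectors $(\phi_{00},\phi_{01},\phi_{10},\phi_{11})$: namely $(0,0,0,0)$ (even/even), $(0,1,0,0)$ (even/odd), $(\tfrac12,\tfrac12,-\tfrac12,-\tfrac12)$ (odd/even), and $(\tfrac12,\tfrac12,-\tfrac12,\tfrac12)$ (odd/odd).

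The even/odd case is exactly $e$. To reach $e'$ and $e''$ I would correct the off-diagonal signs using parity automorphisms. Let $\gamma_1$ be the automorphism $U\mapsto -U,\ V\mapsto V$; it commutes with $\Phi$, preserves the trace and flip-invariance, and satisfies $\phi_{ij}\gamma_1=(-1)^i\phi_{ij}$. Applying $\gamma_1$ to the odd/even projection turns $(\tfrac12,\tfrac12,-\tfrac12,-\tfrac12)$ into $(\tfrac12,\tfrac12,\tfrac12,\tfrac12)$, giving $e'$. (Note that the paper's $\gamma$, which flips $\phi_{01}$ and $\phi_{10}$ \emph{together}, cannot by itself produce $(\tfrac12,\tfrac12,\tfrac12,\tfrac12)$ from the available half-integer vectors, so the single-generator twist $\gamma_1$ is genuinely needed.) For $e''$ I would use that $\gamma$ sends the odd/odd vector $(\tfrac12,\tfrac12,-\tfrac12,\tfrac12)$ to $(\tfrac12,-\tfrac12,\tfrac12,\tfrac12)$, so an orthogonal sum of an odd/even projection with a $\gamma$-twisted odd/odd projection has $\phi$-vector $(1,0,0,0)$; equivalently and more cleanly, $e''=1-f$ for a flat projection $f$ has $\T_2(1-f)=(1-\vartau(f);\,1,0,0,0)$, since flat projections are flip-invariant with vanishing $\phi_{ij}$.

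Finally, for density I would first note that, with the prescribed parities of $(r,s)$, the traces $\alpha=r\theta+s$ are dense in $(\tfrac12,1)$ by equidistribution of $\{2m\theta \bmod 2\}$ (valid since $\theta$ is irrational, as $r$ even forces $r\theta\in 2\mathbb Z\theta$ and the parity of $s$ is adjusted by adding multiples of $2$). To push each family down to traces dense in $(0,\tfrac12)$ while preserving the $\phi$-type, I would subtract a flat sub-projection: if $p$ is one of the above projections of trace $\alpha$ and $q\le p$ is flat with $\vartau(q)=\alpha-t$, then $p-q$ is flip-invariant of trace $t$ with the \emph{same} $\phi$-invariants. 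By Theorem \ref{flattraces} the admissible values of $t$ fill a dense subset of $(0,\tfrac12)$ in the appropriate coset of $2(\mathbb Z+\mathbb Z\theta)$, and for $e''=1-f$ density is immediate from the density of flat traces in $(\tfrac12,1)$.

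The hard part will be this last bridging step: Lemma \ref{lemmaSWprojmodules} only supplies projections with traces in $(\tfrac12,1)$, so producing flat sub-projections $q\le p$ of prescribed trace (to bring the trace into $(0,\tfrac12)$ without disturbing the $\phi_{ij}$) requires comparison and cancellation in the flip fixed-point algebra $A_\theta^\Phi$. Matching the exact target vectors via the single-generator parity automorphism $\gamma_1$ is the other point demanding care, since the off-diagonal signs coming out of the raw Powers--Rieffel construction are not the ones prescribed in the statement.
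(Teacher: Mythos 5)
Your proposal is correct in substance and shares the paper's skeleton: start from Lemma \ref{lemmaSWprojmodules}, tabulate the $\phi$-vectors by the parity of $(r,s)$ (your four vectors $(0,0,0,0)$, $(0,1,0,0)$, $(\tfrac12,\tfrac12,-\tfrac12,-\tfrac12)$, $(\tfrac12,\tfrac12,-\tfrac12,\tfrac12)$ all match the paper's computations), then fix signs and reduce traces. Where you genuinely diverge is in the sign-fixing and the trace reduction. For $e'$ the paper takes the odd/odd projection, applies $\gamma$ to get $(\tfrac12,-\tfrac12,\tfrac12,\tfrac12)$, and then adds an orthogonal copy of $e$ of type $(0,1,0,0)$ to reach $(\tfrac12,\tfrac12,\tfrac12,\tfrac12)$; your single-generator twist $\gamma_1:U\mapsto -U,\ V\mapsto V$ applied to the odd/even projection reaches the target in one step, and since $\gamma_1$ commutes with $\Phi$ (commuting with $\sigma$ is not needed here), this is a legitimate and arguably cleaner shortcut. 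For the trace reduction the paper stays inside the Powers--Rieffel family, subtracting flip-conjugates of the even/even projection $e_2$ (whose $\phi$-vector vanishes and whose traces $\alpha'$ are dense in $(\tfrac12,1)$, so the differences $\alpha-\alpha'$ are dense in $(0,\tfrac12)$); your subtraction of flat subprojections via Theorem \ref{flattraces} also works and is not circular (that theorem is proved independently in Section 5), but it is less self-contained. The one soft spot is your $e''=1-f$ with $f$ flat: this needs all four $\phi_{ij}(f)$ to vanish individually, whereas the paper only records that the $\psi$-invariants of flat projections vanish, which via \eqref{phiandpsi} gives $\phi_{00}(f)=\phi_{11}(f)=0$ and $\phi_{01}(f)+\phi_{10}(f)=0$ but not each off-diagonal term separately; the paper sidesteps this by taking $e''=1-e_2$, for which $\phi_{ij}(e_2)=0$ is immediate from Lemma \ref{lemmaSWprojmodules}, and you should either do the same or verify the individual vanishing (your alternative orthogonal-sum construction of $(1,0,0,0)$ from odd/even plus $\gamma$-twisted odd/odd also checks out). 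Your closing caveat about needing comparison and cancellation in $A_\theta^\Phi$ to position subprojections is exactly the step the paper performs implicitly when it picks a flip-invariant unitary $w$ with $we_2w^*\le e_1$.
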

\begin{proof}
Using Lemma \ref{lemmaSWprojmodules} with $\alpha = r\theta + s$ in $(\frac12,1)$ where $r$ is even and $s$ odd, the Powers-Rieffel projection
\[
e_1 = Vg(U^r) + f(U^r) + g(U^r)V^{-1}
\]
has invariant $\T_2(e_1) = (r\theta + s; 0,1,0,0)$. Indeed, in this case $\phi_{ij}(f(U^r)) = 0$ and 
\[
\phi_{ij}(e_1) = 2 \phi_{ij}(g(U^r)V^{-1}) = \updelta_2^{j-1}\updelta_2^{i}.  
\]
Thus, $\phi_{01}(e_1)=1$ and the other $\phi_{ij}(e_1)=0$.

Now let us take any other irrational $\alpha' = r'\theta + s'$ in $(\frac12,1)$ but this time with both $r'$ and $s'$ even. The corresponding Powers-Rieffel projection (Lemma \ref{lemmaSWprojmodules})
\[
e_2 = Vg(U^{r'}) + f(U^{r'}) + g(U^{r'})V^{-1}
\]
has $\phi_{ij}(e_2)=0$ and $\T_2(e_2) = (\alpha'; 0,0,0,0)$. Since the sets of such $\alpha$ and $\alpha'$ are dense in $(\frac12,1)$, choosing $1>\alpha >\alpha' > \frac12$ we get a dense set of traces $\{\alpha - \alpha'\}$ in $(0,\frac12)$. Upon picking a flip-invariant unitary $w$ such that $we_2w^* \le e_1$, we obtain the flip-invariant projection
\[
e = e_1 - we_2w^* 
\]
with invariant $\T_2(e) = (\vartau(e); 0,1,0,0)$ and traces dense in $(0,\frac12)$,  giving us the projections $e$ in the statement of the lemma.

Note that
\[
\T_2(1-e_2) = (1; 1,0,0,0) - (\alpha'; 0,0,0,0) = (1-\alpha'; 1,0,0,0)
\]
whose traces are dense in $(0,\frac12)$, which gives us the projections $e''$ in the statement of the lemma.

Now let's suppose that $r$ and $s$ are odd and let $e_{{}_0}$ be the associated Powers-Rieffel projection of trace $\beta = r\theta+s \in (\frac12,1)$. Then $\phi_{ij}(e_{{}_0}) = \tfrac12(-1)^i \updelta_2^j +  \tfrac12 \updelta_2^{j-1}$ and
\[
\T_2(e_{{}_0}) = (\beta; \tfrac12, \tfrac12, -\tfrac12, \tfrac12)
\]
which in view of \eqref{phigamma} gives
\[
\T_2(\gamma e_{{}_0}) = (\beta; \tfrac12, -\tfrac12, \tfrac12, \tfrac12).
\]
Subtracting from $\gamma e_{{}_0}$ subprojections equivalent to $e_2$'s of traces $\alpha'$ less than $\beta$ (as done previously), we obtain flip-invariant projections $e'''$ such that
\[
\T_2(e''') = \T_2(\gamma e_{{}_0}) - \T_2(e_2) = (\beta - \alpha'; \tfrac12, -\tfrac12, \tfrac12, \tfrac12)
\]
where the set of traces $\{\beta -\alpha'\}$ is dense in $(0,\frac12)$. Adding projections $\Phi$-unitarily equivalent to $e$ orthogonally to $e'''$ one gets flip-invariant projections $e'$ such that
\[
\T_2(e') = (\vartau(e'); \tfrac12, \tfrac12, \tfrac12, \tfrac12)
\]
with traces $\vartau(e')$ is dense in $(0,\frac12)$. 
\end{proof}

In view of Theorem \ref{semiflattraces}, the projections $e,e',e''$ in Lemma \ref{ees} can be conjugated by suitable flip-invariant unitaries $u$ so that, for instance, $h=ueu^*$ is under a semicyclic projection $g$ of some suitably larger trace. (Recall that this means $g\sigma(g)=0$ where $g$ is flip invariant.) Clearly, the $\T_2$ invariants of $e$ and $h$ are the same (since $u$ is flip-invariant), with the difference that $h$ is now a semicyclic projection. Therefore, we obtain the following.

\begin{cor}\label{corh}
{\it Let $\theta$ be irrational. There are semicyclic projections $h,h',h''$ in $A_\theta$ with invariants
\[
\T_2(h) = (\vartau(h); 0,1,0,0), \qquad 
\T_2(h') = (\vartau(h'); \tfrac12,\tfrac12,\tfrac12,\tfrac12), \qquad 
\T_2(h'') = (\vartau(h''); 1,0,0,0)
\]
such that the set of traces of each type is dense in $(0,\frac12)$.
}
\end{cor}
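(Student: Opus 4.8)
The plan is to upgrade the flip-invariant projections $e, e', e''$ of Lemma \ref{ees} to semicyclic ones carrying the identical $\T_2$-invariants, by realizing each as a flip-invariant subprojection of an ambient semicyclic projection of larger trace. The first point I would record is that a flip-invariant subprojection of a semicyclic projection is again semicyclic: if $g$ is semicyclic (so $\sigma^2(g)=g$ and $g\sigma(g)=0$) and $h\le g$ with $\sigma^2(h)=h$, then $\sigma(h)\le\sigma(g)$, so using $h=hg$ and $\sigma(h)=\sigma(g)\sigma(h)$ one gets $h\sigma(h)=hg\sigma(g)\sigma(h)=0$, while $\sigma^2(h)=h$ by hypothesis; hence $h$ is semicyclic.

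The second ingredient is that $\T_2$ is unaffected by passing to a flip-invariantly equivalent subprojection. For any flip-invariant unitary $u$ the $\Phi$-trace identity gives $\phi_{ij}(ueu^*)=\phi_{ij}(\Phi(u^*)\,ue)=\phi_{ij}(u^*ue)=\phi_{ij}(e)$, and clearly $\vartau(ueu^*)=\vartau(e)$; more generally, two Murray--von Neumann equivalent projections in $A_\theta^\Phi$ represent the same class in $K_0(A_\theta^\Phi)$ and so share the same $\T_2$.

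First I would fix one projection, say $e$ with $\T_2(e)=(\vartau(e);0,1,0,0)$ and $\vartau(e)\in(0,\tfrac12)$ as in Lemma \ref{ees}. By Theorem \ref{semiflattraces} there is a semiflat projection $f=g+\sigma(g)$ whose semicyclic component $g$ has trace $\vartau(g)=\tfrac12\vartau(f)$ lying in $(\vartau(e),\tfrac12)\cap(\mathbb Z+\mathbb Z\theta)$; such a value is available because these traces are dense in $(0,\tfrac12)$ and $\vartau(e)<\tfrac12$. Because $A_\theta^\Phi$ has cancellation and its comparison of projections is detected by the canonical trace $\vartau$ alone (the $\phi_{ij}$ being \emph{unbounded} functionals, hence irrelevant to the order), the strict inequality $\vartau(e)<\vartau(g)$ yields $e\precsim g$ in $A_\theta^\Phi$. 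Thus there is a flip-invariant partial isometry $w$ with $w^*w=e$ and $h:=ww^*\le g$ (equivalently $h=ueu^*$ for a suitable flip-invariant unitary $u$). By the first point $h$ is semicyclic, by the second $\T_2(h)=\T_2(e)$, and $\vartau(h)=\vartau(e)$ runs over the dense set furnished by Lemma \ref{ees}. Repeating verbatim with $e'$ and $e''$ produces $h'$ and $h''$, which proves the corollary.

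The hard part will be the comparison step -- namely that $\vartau(e)<\vartau(g)$ already forces $e$ to embed beneath $g$ inside $A_\theta^\Phi$. This relies on the structure of the flip fixed-point algebra: its simplicity and cancellation, together with the fact that its bounded tracial state is unique, so that the order on $K_0(A_\theta^\Phi)$ is governed by $\vartau$. Granting this structural input, the remaining steps are purely formal.
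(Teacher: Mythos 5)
Your proposal is correct and follows essentially the same route as the paper: the paper likewise invokes Theorem \ref{semiflattraces} to produce a semicyclic projection $g$ of larger trace, conjugates $e$ by a flip-invariant unitary to sit beneath $g$, and observes that the $\T_2$ invariants are unchanged. Your write-up merely makes explicit the two small verifications (that a flip-invariant subprojection of a semicyclic projection is semicyclic, and that $\T_2$ is invariant under flip-invariant conjugation) and the comparison step via the trace-determined order on $K_0(A_\theta^\Phi)$, all of which the paper leaves implicit.
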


Now consider the semiflat projection $f = h + \sigma(h)$ associated to the first type of projection $h$ in this corollary. We have
\[
\psi_{20}(f) = 2 \psi_{20}(h) = 2\phi_{00}(h) = 0
\]
likewise $\psi_{21}(f) = 2 \phi_{11}(h) = 0$, and 
\[
\psi_{22}(f) = 2 \psi_{22}(h) = 2\phi_{01}(h) + 2\phi_{10}(h) = 2.
\]
Therefore $f$ is semiflat of topological genus $(0,0,2)$.\footnote{The author had some difficulty finding semiflat projections of genus $(0,0,2)$.} In addition, the traces of such $f$ form a dense set in $(0,1)$.

In the same way, from $h'$ and $h''$ we obtain semiflat projections $f'$ and $f''$ with respective topological genera $(1,1,2)$ and $(2,0,0)$.  We will say that a class of projections has {\it trace density} when the set of its traces is dense in $(0,1)$. We have therefore addressed part of the following result.

\begin{lem}\label{typeslemma}
{\it Let $\theta$ be irrational. Each triple 
\[
(1,1,2), \quad (2,0,0), \quad (0,0,2), \quad (0,2,0),
\]
\[
(-1,-1,-2), \quad (-2,0,0), \quad (0,0,-2),\quad (0,-2,0),
\]
is the topological genus of semiflat projections in $A_\theta^\sigma$ with dense traces in $(0,1)$.
}
\end{lem}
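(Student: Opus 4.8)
The plan is to realize all eight genera by semiflat projections $f=h+\sigma(h)$ built from semicyclic projections $h$ whose flip invariants $(\phi_{00}(h),\phi_{11}(h),\phi_{01}(h)+\phi_{10}(h))$ are half the desired triple; by \eqref{phiandpsi} and the $\sigma$-invariance of the $\psi_{2j}$ used in the computation preceding the lemma, the genus of $f$ is then $2(\phi_{00}(h),\phi_{11}(h),\phi_{01}(h)+\phi_{10}(h))$. The three triples $(0,0,2),(1,1,2),(2,0,0)$ have already been produced from the $h,h',h''$ of Corollary \ref{corh}, each with traces dense in $(0,1)$. The triple $(0,0,-2)$ is then immediate from the parity automorphism $\gamma$: since $\gamma$ commutes with $\sigma$, the projection $\gamma(h)+\sigma(\gamma h)$ is again semiflat, and by \eqref{phigamma} its genus is $(\psi_{20}(f),\psi_{21}(f),-\psi_{22}(f))$; applying this to the genus-$(0,0,2)$ family gives genus $(0,0,-2)$ with the same dense traces.

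For the remaining positive triple $(0,2,0)$ I would first build a flip-invariant projection $P$ in $A_\theta^\Phi$ with $\T_2(P)=(\tau(P);0,0,0,1)$ by combining the building blocks of Lemma \ref{ees}: the orthogonal sum $e'+\gamma e'$ has $\T_2=(\cdot;1,0,0,1)$ (using \eqref{phigamma}), and subtracting a $\Phi$-unitarily placed copy of $e''$ (with $\T_2=(\cdot;1,0,0,0)$) cancels the $\phi_{00}$-component, leaving $(\cdot;0,0,0,1)$; choosing the traces of $e',e''$ from the dense sets of Lemma \ref{ees} makes $\tau(P)$ range over a dense subset of $(0,\tfrac12)$. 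I would then upgrade $P$ to a semicyclic projection $h$ of the same $\T_2$ by conjugating by a flip-invariant unitary so that $h$ sits beneath a semicyclic projection of larger trace (which exists with dense traces by Theorem \ref{semiflattraces}), using that a flip-invariant subprojection of a semicyclic projection is itself semicyclic. Doubling, $f=h+\sigma(h)$ has genus $(0,2,0)$ with traces dense in $(0,1)$.

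The three sign-reversed triples $(-2,0,0),(0,-2,0),(-1,-1,-2)$ I would obtain by an orthogonal-complement construction. Fix a genus-zero semicyclic projection $k_0$ (the semicyclic upgrade of a Powers--Rieffel projection with all $\phi_{ij}=0$) of suitably large trace, and a semicyclic $h$ of positive genus $(2,0,0)$, $(0,2,0)$, or $(1,1,2)$ respectively, with $h\le k_0$. A short orthogonality computation shows $k_0-h$ is again semicyclic and that $(k_0-h)+\sigma(k_0-h)=(k_0+\sigma k_0)-(h+\sigma h)$ is semiflat; since $\phi_{ij}(k_0)=0$, its genus is exactly the negative of that of $h+\sigma(h)$. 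Drawing $\tau(k_0)$ and $\tau(h)$ from their dense families with $\tau(h)<\tau(k_0)$ yields the required genera with traces dense in $(0,1)$.

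The main obstacle is the realizability of the subprojection relations underlying both the subtraction in the $(0,2,0)$ step and the complements $h\le k_0$: these amount to showing that the relevant $K_0$-classes of $A_\theta^\Phi$ — which carry negative $\phi_{00}$- or $\phi_{11}$-components (for instance $[k_0]-[h]=(\tau(k_0)-\tau(h);-1,0,0,0)$ in the $(-2,0,0)$ case) — lie in the positive cone, and that the embeddings can be chosen flip-invariantly at prescribed traces. This is exactly where the cancellation property and the injectivity of the flip Connes--Chern character $\T_2$ (Proposition 3.2 of \ccite{SWprojmodules}), together with the determination of the ordered $K_0$ of the flip fixed-point algebra in \ccite{SWprojmodules} and \ccite{SWcmp}, are invoked; by contrast the genus bookkeeping via \eqref{phiandpsi} and \eqref{phigamma}, and the verification that complements of semicyclic projections remain semicyclic, are routine.
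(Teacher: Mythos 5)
Your proposal is correct in outline and coincides with the paper for the three basic positive genera and for $(0,0,-2)$ (Corollary \ref{corh} plus the parity automorphism $\gamma$), but it takes a genuinely different route for the remaining four genera. For $(0,2,0)$ the paper simply notes that $(0,2,0)=2(1,1,2)+(-2,0,0)+2(0,0,-2)$ is a non-negative integral combination of genera already realized with trace density and takes orthogonal sums of small-trace semiflat projections; your direct construction from $e'+\gamma(e')$ minus a copy of $e''$ is arithmetically sound (indeed $(\cdot;1,0,0,1)-(\cdot;1,0,0,0)=(\cdot;0,0,0,1)$ by \eqref{phigamma}), but it obliges you to justify two extra embedding statements in $A_\theta^\Phi$ that the paper's combination trick sidesteps. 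For the negative genera the paper proves the general Lemma \ref{negativetype}: it embeds $uhu^*$ under $g+\sigma^2(g)$ with $g$ cyclic, so that the ambient semicyclic projection doubles to the \emph{flat} projection $\sigma^*(g)$, whose genus vanishes automatically and for free; no genus-zero semicyclic projection has to be manufactured. Your version subtracts inside a genus-zero semicyclic $k_0$ instead, which works in principle, but your description of $k_0$ as the ``semicyclic upgrade of a Powers--Rieffel projection with all $\phi_{ij}=0$'' does not work as stated: the flip-invariant Powers--Rieffel projections of Lemma \ref{lemmaSWprojmodules} have trace in $(\tfrac12,1)$ and therefore can never be semicyclic. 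You must first pass to differences $e_2-we_2'w^*$ of two such projections with vanishing $\phi_{ij}$, exactly as in the proof of Lemma \ref{ees}, to obtain flip-invariant projections with vanishing invariants and traces dense in $(0,\tfrac12)$, and only then conjugate them under a semicyclic projection of larger trace. With that repair, and with the positivity and flip-invariant embedding steps justified by the trace-determined order on $K_0$ of the flip fixed-point algebra as you indicate, your argument goes through; what the paper's two devices buy is a shorter proof that leans only on flat projections and on genera already in hand.
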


Applying the automorphism $\gamma$ to semiflats of genus $(0,0,2)$ we obtain 
semiflat projections of genus $(0,0,-2)$ with trace density.

Since $(0,2,0) = 2(1,1,2) + (-2,0,0) + 2(0,0,-2)$ is a positive linear combination of genera with trace density, we immediately get semiflat projections of genus $(0,2,0)$ with trace density.  To complete the proof Lemma \ref{typeslemma} we must deal with the remaining genera:
\[
(-2,0,0), \quad (-1,-1,-2), \quad (0,-2,0).
\]
These, however, follow from the lemma.

\begin{lem}\label{negativetype}
{\it For each semiflat projection of genus $(a,b,c)$, there is a semiflat projection of genus $(-a,-b,-c)$. If the former has trace density, so does the latter.
}
\end{lem}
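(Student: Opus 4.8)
The plan is to obtain the genus-reversed projection as the \emph{complement} of a copy of $f$ sitting inside a larger semiflat projection whose genus vanishes. Write the given projection as $f=h+\sigma(h)$ with $h$ semicyclic. By the computation preceding Lemma~\ref{typeslemma}, together with the identities \eqref{phiandpsi}, the genus of \emph{any} semiflat $h'+\sigma(h')$ equals $2\bigl(\phi_{00}(h'),\,\phi_{11}(h'),\,\phi_{01}(h')+\phi_{10}(h')\bigr)$; in particular $(a,b,c)=2\bigl(\phi_{00}(h),\phi_{11}(h),\phi_{01}(h)+\phi_{10}(h)\bigr)$. It therefore suffices to produce a semicyclic projection whose three relevant $\phi$-combinations are the negatives of those of $h$. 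To this end I first fix an ambient semicyclic projection $k$ with $\T_2(k)=(\vartau(k);0,0,0,0)$ and $\vartau(h)<\vartau(k)<\tfrac12$; such projections exist with traces dense in $(0,\tfrac12)$, constructed exactly as the $h,h',h''$ of Corollary~\ref{corh} but starting from a difference of two $e_2$-type Powers--Rieffel projections of the proof of Lemma~\ref{ees}, which already have $\T_2=(\cdot;0,0,0,0)$. The candidate will be $k'=k-h_1$, where $h_1\le k$ is a suitably embedded copy of $h$.

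The reason the complement is again semiflat is purely algebraic. Any subprojection $h_1\le k$ automatically satisfies $h_1\,\sigma(h_1)=0$, because $k\,\sigma(k)=0$ and $h_1\le k$, $\sigma(h_1)\le\sigma(k)$ force the ranges of $h_1$ and $\sigma(h_1)$ to be orthogonal. Hence if $h_1$ is moreover flip-invariant, then $k-h_1$ is flip-invariant and
\[
(k-h_1)\,\sigma(k-h_1)=(k-h_1)\bigl(\sigma(k)-\sigma(h_1)\bigr)=0,
\]
so $k-h_1$ is semicyclic and $f':=(k-h_1)+\sigma(k-h_1)$ is genuinely semiflat. If in addition $h_1$ is flip-unitarily equivalent to $h$, then $[h_1]=[h]$ in $K_0(A_\theta^\Phi)$, so $\phi_{ij}(h_1)=\phi_{ij}(h)$ and $\phi_{ij}(k-h_1)=\phi_{ij}(k)-\phi_{ij}(h)=-\phi_{ij}(h)$; by the genus formula above the genus of $f'$ is then $-(a,b,c)=(-a,-b,-c)$, as required.

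The main work, and the step I expect to be the crux, is the embedding: producing a \emph{flip-invariant} subprojection $h_1\le k$ with $h_1\cong_\Phi h$. I would carry this out by comparison theory in the flip fixed-point algebra $A_\theta^\Phi$. For irrational $\theta$ this algebra is strongly Morita equivalent to the simple unital crossed product $A_\theta\rtimes_\Phi\mathbb Z_2$, which is a simple unital AF algebra with a unique trace; it therefore has stable rank one, cancellation, and strict comparison of projections by the trace. Since $\vartau(h)<\vartau(k)$, strict comparison gives $h\precsim k$, and cancellation together with stable rank one realizes $h$ as an honest subprojection $h_1\le k$ that is Murray--von Neumann---hence, in a unital stable-rank-one algebra, unitarily---equivalent to $h$ inside $A_\theta^\Phi$. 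Such an equivalence is implemented by a flip-invariant unitary, so $h_1$ is flip-invariant and $\T_2(h_1)=\T_2(h)$ by injectivity of $\T_2$. This is precisely where the orbifold comparison theory (rather than mere bookkeeping with the invariants) enters.

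Finally, for the trace-density addendum, observe that $\vartau(f')=2\bigl(\vartau(k)-\vartau(h)\bigr)=\vartau\bigl(k+\sigma(k)\bigr)-\vartau(f)$. Given a target $s\in(0,1)$ and $\varepsilon>0$, I use the assumed trace density of the genus-$(a,b,c)$ family to choose $f$ with $\vartau(f)=t$ as small as desired, and the trace density of the genus-zero family above to choose $k$ with $2\vartau(k)$ within $\varepsilon$ of $s+t$ and with $\vartau(h)<\vartau(k)<\tfrac12$ (possible since $t$, hence $s+t$, can be kept small). Then $\vartau(f')$ lies within $\varepsilon$ of $s$, so the genus-$(-a,-b,-c)$ semiflats also have traces dense in $(0,1)$.
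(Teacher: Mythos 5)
Your proof is correct and follows essentially the same route as the paper: realize the negative genus as the complement of a flip-unitarily embedded copy of $h$ inside an ambient semicyclic projection whose invariants vanish and whose traces are dense. The only immaterial differences are that the paper takes the ambient semicyclic to be $g+\sigma^2(g)$ with $g$ a cyclic projection supplied by Theorem \ref{flattraces} rather than your Powers--Rieffel construction, and that the comparison-theory justification of the embedding, which you spell out, is left implicit there.
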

\begin{proof}
Let $p = h+\sigma(h)$ be a semiflat projection of genus $(a,b,c)$ (with trace density). We show how to construct semiflat projections of its negative genus $(-a,-b,-c)$ (with trace density). By Theorem \ref{flattraces}, choose a flat projection
\[
k = g + \sigma(g) + \sigma^2(g) + \sigma^3(g)
\]
where $g$ is a cyclic projection such that 
$2\vartau(h) = \vartau(p) < \vartau(k) = 4\vartau(g)$. Choose a flip-invariant unitary $u$ such that $uhu^* \le g+ \sigma^2(g)$, where $g+ \sigma^2(g)$ is flip-invariant and semicyclic. The difference projection
\[
\tilde h := g + \sigma^2(g) - uhu^*
\]
is semicyclic as well (and is flip invariant) and its associated semiflat projection $\tilde p := \tilde h + \sigma(\tilde h)$ has genus $(-a,-b,-c)$ (since the $\psi_{ij}$ invariants of $g$ and $k$ vanish). Further, the traces
\[
\vartau(\tilde p) = 2\vartau(\tilde h) = 4 \vartau(g) - 2\vartau(h) = \vartau(k) - \vartau(p)
\]
are dense in $(0,1)$ (since the set of traces $\vartau(k)$ and $\vartau(p)$ are each dense in $(0,1)$).
\end{proof}

This completes the proof of Lemma \ref{typeslemma}.

\begin{lem}\label{traces2and4}
{\it Let $x$ be a class in $K_0(A_\theta^\sigma)$. If $\psi_{10}(x)=\psi_{11}(x)=0$, then $\vartau(x) \in 2\mathbb Z + 2\mathbb Z\theta$.  If all $\psi_{ij}(x)=0$, then $\vartau(x) \in 4\mathbb Z + 4\mathbb Z\theta$. 
}
\end{lem}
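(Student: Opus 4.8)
The plan is to reduce everything to the image lattice of the Fourier Connes--Chern character. Since $K_0(A_\theta^\sigma)\cong\mathbb Z^9$ and $\T_4$ is injective for irrational $\theta$, a class $x$ is completely determined by the six-tuple $\T_4(x)$, and both assertions become statements about the rank-nine lattice $\Lambda=\T_4(K_0(A_\theta^\sigma))\subset\mathbb C^6$: writing $x$ in a $\mathbb Z$-basis whose $\T_4$-values are known, the hypotheses $\psi_{10}(x)=\psi_{11}(x)=0$ (resp.\ all $\psi_{ij}(x)=0$) become $\mathbb Z$-linear conditions on the coordinates of $x$, and one reads off the resulting constraint on $\vartau(x)$. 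The generating set needed is already available from the projections built in this section: the flat projections of Theorem~\ref{flattraces}, the semiflat projections of the basic genera from Lemma~\ref{typeslemma}, the identity (with $\T_4(1)=(1;1,0;1,0,0)$), and classes detecting $\psi_{10},\psi_{11}$. It is the quantization $\psi_{10},\psi_{11}\in\mathbb Z+\mathbb Z\tfrac{1-i}2$, $\psi_{20},\psi_{21}\in\tfrac12\mathbb Z$, $\psi_{22}\in\mathbb Z$ that makes these conditions genuinely integral.

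For the second assertion I would first cut down the flip invariants. By the relations \eqref{phiandpsi} the three hypotheses $\psi_{20}(x)=\psi_{21}(x)=\psi_{22}(x)=0$ read $\phi_{00}(x)=\phi_{11}(x)=0$ and $\phi_{01}(x)+\phi_{10}(x)=0$. Since $x$ is $\sigma$-fixed and $\sigma\colon U\to V^{-1},\,V\to U$ interchanges the two functionals $\phi_{01}$ and $\phi_{10}$, we get $\phi_{01}(x)=\phi_{10}(x)$, which together with $\phi_{01}(x)+\phi_{10}(x)=0$ forces $\phi_{01}(x)=\phi_{10}(x)=0$; hence all four $\phi_{ij}(x)$ vanish. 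The extra factor of $2$ beyond the first assertion then comes from matching $x$ against the flat classes of Theorem~\ref{flattraces}: one shows that the sublattice $\{\,x:\psi_{ij}(x)=0\ \forall\,i,j\,\}$ of $\Lambda$ is generated by flat classes, whose traces lie in $4\mathbb Z+4\mathbb Z\theta$, so $\vartau(x)\in4\mathbb Z+4\mathbb Z\theta$. Note the $\sigma$-invariance of $x$ is essential here, since at the flip level alone the vanishing of all $\phi_{ij}$ only gives $\vartau\in 2\mathbb Z+2\mathbb Z\theta$ (e.g.\ a trace-$2\theta$ flip projection); the second factor of $2$ is genuinely an order-four ($\sigma$-averaging) phenomenon.

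The real content, and the main obstacle, is the first assertion, governed by the order-four invariants $\psi_{10},\psi_{11}$. Here the flip-reduction above is unavailable, since nothing is assumed about $\psi_{20},\psi_{21},\psi_{22}$, so one genuinely needs to know how $\psi_{10},\psi_{11}$ sit relative to the trace. The cleanest route is to prove that the kernel of $(\psi_{10},\psi_{11})\colon K_0(A_\theta^\sigma)\to(\mathbb Z+\mathbb Z\tfrac{1-i}2)^2$ is generated by semiflat classes, which by construction all have trace $\vartau=2\vartau(h)\in 2\mathbb Z+2\mathbb Z\theta$; this is exactly the statement that $\vartau\bmod 2(\mathbb Z+\mathbb Z\theta)$ factors through $(\psi_{10},\psi_{11})$. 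I expect the difficulty to be carrying this out \emph{without} invoking the Main Theorem~\ref{maintheorem} (which is proved using the present lemma), so one must instead establish the generation by a direct rank count on the explicit $\T_4$-table of the generators above, and then verify the number-theoretic bookkeeping that the ``odd part'' of $\vartau(x)$ in $\mathbb Z+\mathbb Z\theta$ is detected precisely by the integer parts of $\psi_{10}(x),\psi_{11}(x)$.
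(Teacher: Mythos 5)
Your overall strategy --- treat both assertions as $\mathbb Z$-linear statements about the image lattice of $\T_4$ and read off the constraint on $\vartau(x)$ from the coordinates of $x$ in an explicit spanning set --- is exactly the paper's strategy. But there is a genuine gap at the one point where all the content lives: you never produce the explicit spanning set. The generators you list (flat classes, semiflat classes of the basic genera, the identity, and unspecified ``classes detecting $\psi_{10},\psi_{11}$'') are not shown to span the rank-nine lattice $\T_4(K_0(A_\theta^\sigma))$; indeed the flat and semiflat classes all satisfy $\psi_{10}=\psi_{11}=0$, so together with the identity they miss most of the rank-four contribution of $(\psi_{10},\psi_{11})$, and the ``detecting'' classes are precisely what you would need to exhibit. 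Worse, your two key claims --- that $\ker(\psi_{10},\psi_{11})$ is generated by semiflat classes and that $\{x:\psi_{ij}(x)=0\ \forall ij\}$ is generated by flat classes --- are essentially the Main Theorem (minus positivity), which in the paper is proved \emph{using} this lemma; you correctly flag the circularity but the proposed escape (``a direct rank count on the explicit $\T_4$-table of the generators above'') cannot be carried out because that table is exactly what is missing.

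The paper supplies the missing input as Lemma \ref{lemma9vectors}: nine explicit vectors $V_1,\dots,V_9$ spanning the range of $\T_4$, imported from the character-table computation for $A_\theta\rtimes_\sigma\mathbb Z_4$ in \cite{SWChern}. Writing $\T_4(x)=\sum_j N_jV_j$ with $N_j\in\mathbb Z$, the two equations $\psi_{10}(x)=\psi_{11}(x)=0$ are solved for $N_5,N_6,N_7,N_8$ in terms of $N_1,N_2,N_3,N_4,N_9$, whereupon $\vartau(x)=2N_1+4N_2+2N_3+2N_4+(2N_2+2N_9)\theta\in 2\mathbb Z+2\mathbb Z\theta$ drops out by inspection of \eqref{startrace}; imposing additionally $\psi_{20}(x)=\psi_{21}(x)=\psi_{22}(x)=0$ in \eqref{B} forces the trace into $4\mathbb Z+4\mathbb Z\theta$. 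Note that this derivation makes no use of flat or semiflat projections at all, which is why it can legitimately precede the Main Theorem. Your observation that $\sigma$-invariance forces $\phi_{01}(x)=\phi_{10}(x)$, hence (with $\psi_{22}(x)=0$) the vanishing of all four flip invariants, is correct and a nice remark, but it does not by itself yield the factor $4$: to get from ``all $\psi_{ij}$ vanish'' to ``$\vartau(x)\in4\mathbb Z+4\mathbb Z\theta$'' you still need the lattice computation, so the gap above is not bypassed.
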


The proof of this lemma is contained in the proof of Theorem \ref{maintheorem} given in Section 4 below (see paragraph following equation \eqref{B} below).

\textcolor{blue}{\large \section{Proof of Main Theorem}}

In this section we prove the main theorem on the determination of the semiflat positive cone of $K_0$ of the Fourier orbifold $A_\theta^\sigma$. Before doing so we state a lemma that is essentially a paraphrase of a result from \ccite{SWChern}, which was originally stated for crossed products, for our fixed point subalgebra situation.
 
 \medskip
 
\begin{lem}\label{lemma9vectors}
{\it The range of the homomorphism $\T_4: K_0(A_\theta^\sigma) \to \mathbb C^6$ is spanned by the nine vectors
\begin{align*}
V_1 &= (2; \ 0, 0; \ 2, 0, 0) \\
V_2 &= (2;\ 1+i, 0;\ 0, 0, 0) \\
V_3 &= (1;\ 1, 0;\ 1, 0, 0) \\
V_4 &= (2;\ 0, 0;\ 0, 2, 0) \\
V_5 &= (2;\ 0, 1+i;\ 0, 0, 0) \\
V_6 &= (1;\ 0, 1;\ 0, 1, 0) \\
V_7 &= (\theta;\ \tfrac12 - \tfrac12 i,\ \tfrac12 - \tfrac12 i; \ \ \tfrac12, \tfrac12, 1) \\  
V_8 &= (\theta;\ -\tfrac12 - \tfrac12 i,\ -\tfrac12 - \tfrac12 i; \ \ -\tfrac12, -\tfrac12, -1) \\
V_9 &= (\theta;\ -\tfrac12 + \tfrac12 i,\ -\tfrac12 + \tfrac12 i;\ \ \tfrac12, \tfrac12, 1).   
\end{align*}
}
\end{lem}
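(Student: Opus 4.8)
The plan is to read off the nine vectors by transferring the range computation of \cite{SWChern}, which was carried out for the crossed product $A_\theta \rtimes_\sigma \mathbb Z_4$, to the fixed point subalgebra $A_\theta^\sigma$, and then correcting for the fact that the Fourier transform used in \cite{SWChern} is the inverse $\sigma^{-1}$ of the one here. Two structural facts recalled in the excerpt will be used: that $K_0(A_\theta^\sigma) \cong \mathbb Z^9$ for every irrational $\theta$ (by \cite{AP} and \cite{ELPW}), and that $\T_4$ is injective. Hence the range $\T_4\bigl(K_0(A_\theta^\sigma)\bigr)$ is a free abelian group of rank $9$, and it suffices to exhibit nine classes generating $K_0(A_\theta^\sigma)$ whose $\T_4$-images are exactly $V_1,\dots,V_9$: their images then $\mathbb Z$-span the range.

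First I would invoke the strong Morita equivalence between $A_\theta \rtimes_\sigma \mathbb Z_4$ and $A_\theta^\sigma$, exactly as in the flip case recalled in the footnote to the injectivity of $\T_2$. This equivalence induces an isomorphism on $K_0$ under which the canonical trace $\vartau$ and the five unbounded twisted traces correspond, so the generators of $K_0$ and their Chern character data produced in \cite{SWChern} transport verbatim to $A_\theta^\sigma$. This yields nine vectors $W_1,\dots,W_9 \in \mathbb C^6$ recording the values of $(\vartau;\psi_{10},\psi_{11};\psi_{20},\psi_{21},\psi_{22})$ computed with respect to $\sigma^{-1}$.

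The remaining step is the conjugation adjustment. Since a projection is fixed by $\sigma$ precisely when it is fixed by $\sigma^{-1}$, the algebras $A_\theta^\sigma$ and $A_\theta^{\sigma^{-1}}$ coincide and only the bookkeeping of the invariants changes: the unbounded traces attached to $\sigma^{-1}$ in \cite{SWChern} are the complex conjugates of the $\psi_{ij}$ in \eqref{unboundedtraces}. As $\vartau$ is real and $\psi_{20},\psi_{21},\psi_{22}$ take values in $\tfrac12\mathbb Z$ or $\mathbb Z$, whereas $\psi_{10},\psi_{11}$ take values in the complex lattice $\mathbb Z + \mathbb Z\tfrac{1-i}{2}$, conjugation fixes the four real slots and conjugates only the two complex ones; applying this to $W_1,\dots,W_9$ produces exactly $V_1,\dots,V_9$. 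Since the transported classes generate $K_0(A_\theta^\sigma)$, their images span the range, and one checks as a consistency verification that $V_1,\dots,V_9$ are $\mathbb Z$-linearly independent, so that they form a basis of the rank-$9$ range. The main obstacle is not this (routine) independence check but the careful bookkeeping of the transfer: one must be sure that the nine classes of \cite{SWChern} genuinely generate all of $K_0(A_\theta^\sigma)$ -- so that we span the full range rather than a finite-index sublattice -- and that the conjugation acts on exactly the two complex entries $\psi_{10},\psi_{11}$, leaving the topological slots $\psi_{20},\psi_{21},\psi_{22}$ untouched.
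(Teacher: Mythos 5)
Your proposal follows essentially the same route as the paper: transfer the character table of \cite{SWChern} through the canonical isomorphism $K_0(A_\theta^\sigma) \cong K_0(A_\theta \rtimes_\sigma \mathbb Z_4)$ and complex-conjugate the $\psi_{10},\psi_{11}$ entries to account for the inverse Fourier transform, the real slots being unaffected. The only bookkeeping item you omit, which the paper records explicitly, is that the entries of the character table in \cite{SWChern} must also be multiplied by $4$ because of the different normalization of the unbounded traces there.
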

\begin{proof}
These can be obtained from the range of the associated homomorphism calculated for the crossed product $A_\theta \rtimes_\sigma \mathbb Z_4$ in \ccite{SWChern} (see character table on page 645). The only difference that we need to take into account is that the ``Fourier transform" used in \ccite{SWChern} was the inverse of the one used in the current paper -- and this has the effect of taking the complex conjugates of the $\psi_{10}, \psi_{11}$ values obtained in \ccite{SWChern}, which correspond, respectively, to the values of the maps ``$T_{10}$" and ``$\lambda^{1/4}T_{11}$" used in the character table therein. Further, we need to multiply all entries in that character table by 4 in view of the normalizations used for the unbounded traces in \ccite{SWChern}. Once these are taken into account, we obtain the above 9 vectors from those in \ccite{SWChern} in view of the canonical isomorphism $K_0(A_\theta^\sigma) \cong K_0(A_\theta \rtimes_\sigma \mathbb Z_4)$.
\end{proof}

\medskip

We are now ready to prove the Main Theorem \ref{maintheorem}.

\begin{proof} (Proof of Theorem \ref{maintheorem}.)
Fix a class $x$ in $K_0(A_\theta^\sigma)$ such that $\vartau(x) > 0$ and $\psi_{10}(x) = \psi_{11}(x)=0$. If $\vartau(x) > 1$, by Theorem \ref{semiflattraces} we can subtract from $x$ the sum of a finite number of $K_0$-classes of semiflat projections so that the difference has positive trace less than 1. Further, the fact that $\vartau(x)\not= 1$ will follow from the computation below which show that the vanishing of $\psi_{10}(x)$ and $\psi_{11}(x)$ implies that the trace of $x$ is a multiple of 2 -- see, for example, equation \eqref{startrace} below. Therefore, with no loss of generality we may assume that $0 < \vartau(x) < 1$.

Write $\T_4(x)$ as an integral linear combination of the nine vectors in Lemma \ref{lemma9vectors}:
\[
\T_4(x) = \sum_{j=1}^9 N_j V_j
\]
for some integers $N_j$. Reading off the $\psi_{10}$ and $\psi_{11}$ coordinates of $x$, we have
\[
\psi_{10}(x) = N_2(1+i) + N_3 + N_7(\tfrac12 - \tfrac12 i) + N_8(-\tfrac12 - \tfrac12 i)
+ N_9(-\tfrac12 + \tfrac12 i) \ = \ 0 
\]
and
\[
\psi_{11}(x) = N_5(1+i) + N_6 +  N_7(\tfrac12 - \tfrac12 i) + N_8(-\tfrac12 - \tfrac12 i)
+ N_9(-\tfrac12 + \tfrac12 i) \ = \ 0. 
\]
Solving these gives
\[
N_6 = N_3, \qquad N_5 = N_2, \qquad N_7 = N_9 - N_3, \qquad 
N_8 = 2N_2 + N_3.
\]
In terms of the integers $N_1, N_2, N_3, N_4, N_9$, the total trace is
\begin{equation}\label{startrace}
\vartau(x) = \sum_j N_j\vartau(V_j) = 2N_1 + 4N_2 + 2N_3 + 2N_4 + ( 2N_2 + 2N_9)\theta
\end{equation}
which is a multiple of 2 as we had noted at the beginning of the proof. Simplifying, one gets the $\psi_{2k}$ invariants
\[
\psi_{20}(x) = 2N_1 - N_2 + N_9
\]
\[
\psi_{21}(x) = 2N_4 - N_2 + N_9
\]
\[
\psi_{22}(x) = 2N_9 - 2N_2 - 2N_3.
\]
Therefore one gets
\begin{align}
\T_4(x) 
&= (2N_1 + 4N_2 + 2N_3 + 2N_4 + (2N_2 + 2N_9)\theta;\ 
\ 0, 0; 	\label{A}	
\\
&\qquad \qquad \quad 2N_1 - N_2 + N_9, \ \ 2N_4 - N_2 + N_9, \ \ 2N_9 - 2N_2 - 2N_3) \ \notag
\\
&= N_1 (2; \ 0, 0; \ 2, 0 , 0) + N_2 (4+2\theta; \ 0, 0; \ -1, -1, -2) 
+ N_3(2; \ 0, 0; \ 0, 0, -2)   \  \ \label{B}
\\
&\qquad \qquad \quad + N_4(2; \ 0, 0; \ 0, 2, 0) + N_9(2\theta; \ 0, 0; \ 1, 1, 2). \ \notag
\end{align}

We digress momentarily to note that in view of this calculation, the condition $\psi_{10}(x) = \psi_{11}(x)=0$ has yielded the conclusion that $\vartau(x)$ is in $2\mathbb Z + 2\mathbb Z\theta$ (as can be seen from \eqref{A}), thus establishing the first assertion of Lemma \ref{traces2and4}. If, in addition, the remaining invariants $\psi_{2k}(x)$ vanish, then it is easy to check that $\vartau(x)$ is in $4\mathbb Z + 4\mathbb Z\theta$, which establishes the second assertion of Lemma \ref{traces2and4}. Thus, in particular, if all the topological invariants of a $K_0$-class $x$ vanish, then its trace is a multiple of 4 - and we know from Theorem \ref{flattraces} that any such number is the trace of a flat projection.

\medskip

Returning to our current proof, in view of Lemma \ref{typeslemma} we can pick semiflat projections $f_1, f_2, f_3, f_4, f_9$ with the respective topological genera appearing in the last equality in \eqref{B}, and of appropriately small trace, such that 
\[
\T_4\left(x - \sum_{i=1,2,3,4,9} |N_i| [f_i]\right) = (\alpha; 0, 0; 0, 0, 0)
\]
where $\alpha < 1$ is some positive number in $\mathbb Z + \mathbb Z\theta$. We now have a class with all its topological invariants vanishing, so by Lemma \ref{traces2and4}, $\alpha = 4\alpha'$ for some $\alpha' \in \mathbb Z + \mathbb Z\theta$. By Theorem \ref{flattraces}, there is flat projection $f$ of trace $4\alpha'$ and $\T_4 [f] = (4\alpha'; 0, 0; 0, 0, 0)$. This gives
\[
\T_4(x) = \T_4\left([f] + \sum_{i=1,2,3,4,9} |N_i| [f_i]\right)
\]
and by the injectivity of $\T_4$, the class $x$ is a finite non-negative integral linear combination of classes of semiflat projections (at least one of them nonzero). Since $\vartau(x) < 1$, Lemma \ref{lemma1} allows us to write $x = [e]$ for a single semiflat projection $e$ (since the projections $f, f_i$ could all be unitarily combined into a sum of orthogonal semiflat projections, which is also semiflat).
\end{proof}

\textcolor{blue}{\large\section{Proofs of Trace Results}}

In this section we prove Theorems \ref{flattraces} and \ref{semiflattraces} stated in the Introduction. To do this we begin with two lemmas.

\medskip

\begin{lem}\label{lemma1}
{\it Let $e, e_1, \dots, e_n$ be Fourier invariant projections in $A_\theta$ such that
\[
t := \vartau(e_1) + \dots + \vartau(e_n) < \vartau(e).
\] 
There are $\sigma$-invariant unitaries $w_1, \dots, w_n$ such that 
\[
w_1e_1w_1^*+ \dots + w_ne_nw_n^*
\]
is a Fourier invariant subprojection of $e$ of trace $t$.
}
\end{lem}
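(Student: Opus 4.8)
The plan is to prove this by induction on $n$, where the single-projection base case $n=1$ already requires the main analytic idea, and the inductive step reduces to the base case inside a corner. The key tool throughout is that $A_\theta^\sigma$ (equivalently $A_\theta \rtimes_\sigma \mathbb Z_4$, to which it is strongly Morita equivalent) is a simple unital C*-algebra with a unique trace and the cancellation property, so that the Murray--von Neumann order on projections is governed entirely by the trace: for Fourier invariant projections $p, q$ one has $p \precsim q$ (subequivalence inside $A_\theta^\sigma$) whenever $\vartau(p) < \vartau(q)$, and $p \sim q$ whenever $\vartau(p) = \vartau(q)$. Here $\precsim$ and $\sim$ are taken in the fixed point algebra $A_\theta^\sigma$, so the implementing partial isometries and unitaries are automatically $\sigma$-invariant. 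Since these are exactly the unitaries called for, the whole lemma will fall out of this order structure once it is set up correctly.

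First I would establish the base case $n=1$: given $\vartau(e_1) < \vartau(e)$ with $e_1, e$ Fourier invariant, I want a $\sigma$-invariant unitary $w_1$ with $w_1 e_1 w_1^* \le e$. By strict comparison in $A_\theta^\sigma$ there is a partial isometry $v \in A_\theta^\sigma$ with $v^*v = e_1$ and $vv^* =: e_1' \le e$, so $e_1 \sim e_1'$ with $e_1'$ a subprojection of $e$. To upgrade this Murray--von Neumann equivalence to unitary equivalence by a $\sigma$-invariant unitary, I would invoke the standard fact that in a unital C*-algebra with cancellation, two equivalent projections with equivalent complements are unitarily equivalent; since $\vartau(1 - e_1) = \vartau(1 - e_1')$ forces $1 - e_1 \sim 1 - e_1'$ in $A_\theta^\sigma$ as well, the pair $(e_1, e_1')$ together with their complements yields a unitary $w_1 \in A_\theta^\sigma$ (hence $\sigma$-invariant) with $w_1 e_1 w_1^* = e_1' \le e$, and $\vartau(w_1 e_1 w_1^*) = \vartau(e_1)$.

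For the inductive step, suppose the result holds for $n-1$. Given $e_1, \dots, e_n$ with $t = \sum \vartau(e_i) < \vartau(e)$, set $t' = \sum_{i=1}^{n-1} \vartau(e_i) < \vartau(e) - \vartau(e_n)$. Applying the inductive hypothesis, I obtain $\sigma$-invariant unitaries $w_1, \dots, w_{n-1}$ so that $p := \sum_{i=1}^{n-1} w_i e_i w_i^*$ is a Fourier invariant subprojection of $e$ of trace $t'$. Now consider the orthogonal complement $e - p$, a Fourier invariant projection in $A_\theta^\sigma$ of trace $\vartau(e) - t' > \vartau(e_n)$. Applying the base case inside this setting yields a $\sigma$-invariant unitary $w_n$ with $w_n e_n w_n^* \le e - p$; this subprojection is automatically orthogonal to $p$, so $p + w_n e_n w_n^*$ is a Fourier invariant subprojection of $e$ of trace $t' + \vartau(e_n) = t$, completing the induction.

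The main obstacle is purely a matter of bookkeeping at the interface between subequivalence and unitary equivalence: strict comparison only gives Murray--von Neumann subequivalence inside the corner, and one must be careful that the partial isometries and the cancellation argument all take place \emph{within} $A_\theta^\sigma$ rather than in $A_\theta$, so that the resulting unitaries are genuinely $\sigma$-invariant. Once simplicity, unique trace, and cancellation for $A_\theta^\sigma$ are invoked (these hold for all irrational $\theta$, as noted in the excerpt via $K_0(A_\theta^\sigma) \cong \mathbb Z^9$ and the cancellation property), there is no further analytic difficulty; the argument is an order-theoretic consequence of strict comparison, and all estimates are strict inequalities that leave room to fit each conjugated projection orthogonally under $e$.
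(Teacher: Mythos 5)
Your argument rests on the same two facts as the paper's proof --- that the order on $K_0(A_\theta^\sigma)$ is determined by the canonical trace, and that $A_\theta^\sigma$ has cancellation --- but you organize it as an induction on $n$, fitting one conjugated projection at a time into the complement $e-p$. The paper instead does everything in one step: it realizes $[e_1\oplus\dots\oplus e_n\oplus Q]=[e\oplus O_r]$ in a matrix algebra over $A_\theta^\sigma$, conjugates by a single unitary $W\in M_{r+1}(A_\theta^\sigma)$ to obtain pairwise orthogonal subprojections $f_j\le e$ with $[f_j]=[e_j]$, and then applies cancellation once more to produce the $w_j$. The two routes are interchangeable; yours trades the matrix bookkeeping for an induction and an explicit ``complements'' argument in the base case.

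One claim in your setup is false as stated and needs repair: it is \emph{not} true that two Fourier invariant projections with equal traces are Murray--von Neumann equivalent in $A_\theta^\sigma$. The trace is far from a complete invariant on $K_0(A_\theta^\sigma)\cong\mathbb Z^9$; that is precisely why the paper introduces the topological invariants $\psi_{ij}$ and the character $\T_4$, and Lemma \ref{typeslemma} exhibits semiflat projections of distinct genera whose trace sets are each dense in $(0,1)$, so equal-trace inequivalent pairs abound. Fortunately, at the one point where you invoke this (to conclude $1-e_1\sim 1-e_1'$ from equality of traces), the conclusion holds for a different reason: since $e_1\sim e_1'$ you have $[e_1]=[e_1']$ in $K_0(A_\theta^\sigma)$, hence $[1-e_1]=[1-e_1']$, and cancellation then yields $1-e_1\sim 1-e_1'$. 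With that justification substituted, your proof is correct.
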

\begin{proof}
Since the order structure on $K_0(A_\theta^\sigma)$ is determined by the canonical trace $\vartau$, the hypothesis implies that there is a projection $Q$ in $M_m(A_\theta^\sigma)$ such that 
\[
[e_1 \oplus \dots \oplus e_n \oplus Q] = [e] = [e \oplus O_r]
\]
in $K_0(A_\theta^\sigma)$, where $r = n + m - 1$ and $O_r$ is the zero $r\times r$ matrix.  By the cancellation property of $A_\theta^\sigma$, 
there is a unitary $W$ in $M_{r+1}(A_\theta^\sigma)$ such that 
\[
W(e_1 \oplus \dots \oplus e_n \oplus Q) W^* = e \oplus O_r.
\]
From this, one obtains a set $f_1,\dots, f_n$ of pairwise orthogonal subprojections of $e$ such that $[f_j] = [e_j]$ for each $j$, which by cancellation again gives unitaries $w_j$ in $A_\theta^\sigma$ such that $f_j = w_je_jw_j^*$.  One therefore gets the projection
\[
w_1e_1w_1^*+ \dots + w_ne_nw_n^*
\]
contained in $e$ and with trace $t$. 
\end{proof}

\medskip

We cite the following lemma from \ccite{SWmathscand}.

\begin{lem}\label{projtraces} (See Theorem 1.6 of \ccite{SWmathscand}.)
{\it Let $\theta$ be irrational, $p/q$ a rational (in reduced form) approximant of $\theta$ such that $0<q|q\theta-p|<1$. Then for each positive integer $k$ such that $k|q\theta-p| < \tfrac14$, there exists a cyclic projection in $A_\theta$ of trace $k|q\theta-p|$.
}
\end{lem}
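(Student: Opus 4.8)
The plan is to construct the cyclic projection by hand inside a concrete representation in which $\sigma$ becomes the classical Fourier transform, and to read off both its trace and its orthogonality to its $\sigma$-orbit from phase-space geometry. I would realize $A_\theta$ on $L^2(\mathbb R)$ by the standard Weyl operators, $(Uf)(t) = e^{2\pi i t}f(t)$ and $(Vf)(t) = f(t+\theta)$, so that, as recalled in the Introduction, $\sigma$ is implemented by the $L^2$ Fourier transform $\mathcal F$ (an operator of order $4$). Under this identification $\sigma$ acts on the lattice of Weyl operators $U^mV^n$ as the rotation $(m,n)\mapsto(n,-m)$ of phase space by $\pi/2$, up to scalar phases. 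A sanity check on the hypothesis: since $\sigma$ preserves $\vartau$, the four orbit members $g,\sigma(g),\sigma^2(g),\sigma^3(g)$ all have trace $\vartau(g)$, and if they are mutually orthogonal their sum is a projection of trace $4\vartau(g)\le 1$; this is exactly why the admissible range is $k|q\theta-p|<\tfrac14$.

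For the trace I would use Rieffel's theory of projective modules. Writing $\beta = |q\theta-p|\in(\mathbb Z+\mathbb Z\theta)\cap(0,1)$, the primitive pair $(q,p)$ (here $\gcd(p,q)=1$) determines a Heisenberg module over $A_\theta$ whose $A_\theta$-dimension---equivalently, the trace of the projection $g_0 = {}_{A_\theta}\langle\xi,\xi\rangle$ attached to a tight single generator $\xi\in\mathcal S(\mathbb R)$---equals $\beta$. The approximant hypothesis $0<q|q\theta-p|<1$ guarantees $\beta<1/q$, so that such a generator can be taken localized on phase-space scale $\sim 1/q$, which will matter below. To reach the trace $k\beta$ I would not inflate multiplicity abstractly (that would lose control of orthogonality), but instead use $k$ generators $\xi_1,\dots,\xi_k$, each producing a trace-$\beta$ piece $g_j$, arranged in $k$ disjoint concentric phase-space annuli of the same angular sector; the $g_j$ are then mutually orthogonal and $g=g_1+\dots+g_k$ has trace $k\beta$.

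The cyclic condition $g\sigma(g)=g\sigma^2(g)=g\sigma^3(g)=0$ is the heart of the matter and is not K-theoretic; it must be forced pointwise. It reduces to the vanishing of all cross-correlations $\langle \mathcal F^{\,l}\xi_i,\ U^mV^n\xi_j\rangle_{L^2}=0$ for $l=1,2,3$, all $(m,n)\in\mathbb Z^2$, and all $i,j$; that is, each $\mathcal F^{\,l}\xi_i$ must be orthogonal to the entire $\Lambda$-orbit of every $\xi_j$, where $\Lambda$ is the time-frequency lattice of $U,V$. Geometrically I would arrange every generator to be concentrated in one angular sector of phase space of width $<\pi/2$ that avoids the symmetry directions; its three images under $\mathcal F$ then sit in the three rotated sectors, which are pairwise disjoint, and the distinct annular radii keep the $k$ families separated as well. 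Granting these disjointnesses the cross-correlations vanish and $g$ is cyclic, with $\sum_{l}\sigma^l(g)$ the corresponding flat projection.

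The main obstacle is making the sector-disjointness produce \emph{exact}, not merely approximate, orthogonality: no nonzero Schwartz vector is supported in a phase-space sector, so the ambiguity functions $(m,n)\mapsto\langle \mathcal F^{\,l}\xi_i,U^mV^n\xi_j\rangle$ have full support and cannot be killed on the rotated lattices simply by concentrating $\xi$. Two routes seem viable, and I expect the genuine construction to follow one of them. The first is to choose the generator adapted exactly to the fine lattice furnished by the approximant so that the required ambiguity-function values vanish identically---an exact time-frequency design problem whose solvability is tied to $\gcd(p,q)=1$ and to $\beta<1/q$. The second is an approximation-and-correction scheme: build $g$ with $\|g^2-g\|$ and $\|g\sigma^l(g)\|$ small, pass to the spectral projection of the self-adjoint $\sigma$-invariant sum to obtain an exact Fourier-invariant projection of trace in $(0,1)\cap(\mathbb Z+\mathbb Z\theta)$, and then split it into an exactly orthogonal $\sigma$-orbit by a $\sigma$-equivariant comparison argument. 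Either way, the delicate estimate that the off-diagonal inner products vanish exactly is where the real work lies; the trace bookkeeping and the reduction to cross-correlations are routine. I would deliberately avoid deriving this lemma from Theorem \ref{flattraces}, since that theorem is itself obtained from the present cyclic construction and the implication would be circular.
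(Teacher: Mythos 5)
There is a genuine gap, and you have in fact named it yourself: the entire content of the lemma is the \emph{exact} vanishing of $g\sigma^{l}(g)$ for $l=1,2,3$, and your proposal stops at the point where that must be established. Note first that the paper does not prove this lemma at all --- it is quoted verbatim as Theorem 1.6 of the Math.\ Scand.\ reference \cite{SWmathscand}, where the construction is carried out concretely (explicit vectors in the Heisenberg module attached to $(q,p)$, with the relevant lattice inner products computed and shown to vanish, not merely estimated). Your phase-space picture is the right heuristic and is consistent with that source, and your trace bookkeeping and the reduction to the cross-ambiguity conditions $\langle \mathcal F^{\,l}\xi_i, U^mV^n\xi_j\rangle=0$ are fine; but ``arrange the generators in disjoint sectors and annuli'' cannot by itself yield exact orthogonality, as you concede, since no Schwartz vector has phase-space support in a sector. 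Of your two proposed repairs, the first (an exact time-frequency design) is precisely the unproved content, and you give no construction; solvability ``tied to $\gcd(p,q)=1$ and $\beta<1/q$'' is an assertion, not an argument.

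The second repair is worse: it is circular in the way you were trying to avoid. Producing an exact Fourier-invariant projection $f$ of trace $4k\beta$ from an approximate one is routine functional calculus, but the final step --- ``split it into an exactly orthogonal $\sigma$-orbit by a $\sigma$-equivariant comparison argument'' --- is exactly the assertion that a Fourier-invariant projection with vanishing $\psi$-invariants is flat. In this paper that is Theorem \ref{zerotopflat}, whose proof invokes Theorem \ref{flattraces}, which in turn rests on the present lemma. There is no independent comparison argument available at this stage that decomposes a $\sigma$-invariant projection into four mutually orthogonal pieces cyclically permuted by $\sigma$; order/trace comparison in $K_0(A_\theta^\sigma)$ gives subprojections of the right traces but gives you no control forcing the second piece to be $\sigma$ of the first. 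So the proposal, as written, does not prove the statement; to complete it one must either carry out the explicit computation of \cite{SWmathscand} or supply a genuinely new equivariant orthogonalization lemma.
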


(Of course, one would then have the corresponding flat projection whose trace is $4k|q\theta-p|$.)

\medskip

\begin{prp}
{\it Let $\theta$ be any irrational number in $(0,1)$.  Then  each number in $(0,1) \cap (4\mathbb Z + 4\mathbb Z\theta)$ is the trace of a $\sigma$-flat projection.
}
\end{prp}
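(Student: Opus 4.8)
The plan is to reduce the statement to the cyclic case and then assemble a single flat projection out of small cyclic pieces using the cancellation machinery already in hand. Write the target value as $t = 4\beta$ with $\beta \in (0,\tfrac14) \cap (\mathbb Z + \mathbb Z\theta)$. Since a flat projection $\sigma^*(g) = g + \sigma(g) + \sigma^2(g) + \sigma^3(g)$ built from a cyclic projection $g$ has trace $4\vartau(g)$ (as $\sigma$ preserves $\vartau$), it suffices to produce a finite orthogonal family of cyclic projections whose flat envelopes have traces summing to exactly $t$.

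The first main step is number-theoretic. Writing $p_n/q_n$ for the continued-fraction convergents of $\theta$ and $\epsilon_n = |q_n\theta - p_n|$, I would expand $\beta$ greedily: repeatedly subtract from the current remainder the largest defect not exceeding it, producing $\beta = \sum_{j} c_j \epsilon_{n_j}$ with integers $c_j \ge 1$ and strictly increasing indices $n_0 < n_1 < \cdots$. Every convergent is in reduced form and satisfies $0 < q_n\epsilon_n < 1$, and each partial block $c_j\epsilon_{n_j}$ is bounded above by the total $\beta < \tfrac14$; hence Lemma~\ref{projtraces} applies to each piece (with $k = c_j$), yielding a cyclic projection $g_j$ of trace $c_j\epsilon_{n_j}$, and therefore a flat projection $f_j = \sigma^*(g_j)$ of trace $4c_j\epsilon_{n_j} < 1$, with $\sum_j \vartau(f_j) = 4\beta = t$.

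The second step assembles these into one flat projection of trace exactly $t$. Taking the Fourier invariant projection $E = 1$, of trace $1 > t$, Lemma~\ref{lemma1} furnishes $\sigma$-invariant unitaries $w_j$ so that $f := \sum_j w_j f_j w_j^*$ is a Fourier invariant subprojection of $1$ of trace $t$. Each summand $w_j f_j w_j^* = \sigma^*(w_j g_j w_j^*)$ is again flat, because conjugation by a $\sigma$-invariant unitary carries cyclic projections to cyclic projections, and a finite sum of mutually orthogonal flat projections is flat (as noted after the definition of flatness). Thus $f$ is a flat projection of trace $t$, as required.

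The hard part is the number-theoretic input, namely the \emph{finiteness} of the greedy defect expansion: for a generic real number this process does not terminate, and one must argue that it does terminate precisely because $\beta$ lies in the module $\mathbb Z + \mathbb Z\theta$. I expect this to follow from the theory of Ostrowski representations together with the best-approximation property of the convergents; the delicate point is that the standard Ostrowski expansion is a \emph{signed} combination of the $\eta_n = q_n\theta - p_n$, whereas here one needs a genuinely \emph{non-negative} combination of the positive defects $\epsilon_n$, so the bookkeeping that rewrites one into the other (while keeping each coefficient block below $\tfrac14$ and the total equal to $\beta$) is where the real work lies. Everything else — the reduction $t = 4\beta$, the applicability of Lemma~\ref{projtraces} to convergents, and the final assembly via Lemma~\ref{lemma1} together with the stability of flatness under orthogonal sums — is routine.
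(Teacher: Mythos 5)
Your assembly steps (reduction to $t=4\beta$, applicability of Lemma~\ref{projtraces} to convergents, orthogonalization via Lemma~\ref{lemma1}, and closure of flatness under orthogonal sums and under conjugation by $\sigma$-invariant unitaries) are all sound and match what the paper does. But the step you yourself flag as ``the hard part'' --- the termination of the greedy expansion of $\beta$ as a finite non-negative integer combination $\sum_j c_j\epsilon_{n_j}$ of the positive defects --- is a genuine gap, and it is precisely the crux of the proposition. You do not prove it, and it is not obvious: the greedy algorithm with only the positive quantities $\epsilon_n=|q_n\theta-p_n|$ is not the standard Ostrowski algorithm, the signed-to-unsigned rewriting is exactly where the difficulty sits, and nothing you say rules out an infinite descent through ever-smaller remainders in $\mathbb Z+\mathbb Z\theta$. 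As written, the proof is incomplete.

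The paper sidesteps this entirely with a two-term decomposition that you should compare against. Writing $t=4k(n\theta-m)$ with $k,n\ge 1$, $m\ge 0$ (the case $t=4\ell(r-s\theta)$ being handled by passing to $A_{1-\theta}$), one picks a pair of \emph{consecutive} convergents $\tfrac{p}{q}<\theta<\tfrac{p'}{q'}$ with $\tfrac{m}{n}<\tfrac{p}{q}$ and $p'q-pq'=1$. The unimodularity identities
\[
\theta = p'(q\theta-p)+p(p'-q'\theta), \qquad 1 = q'(q\theta-p)+q(p'-q'\theta)
\]
express both $\theta$ and $1$ as positive combinations of the two defects $q\theta-p$ and $p'-q'\theta$, whence $t=4a(q\theta-p)+4b(p'-q'\theta)$ with $a=k(np'-mq')$ and $b=k(np-mq)$ \emph{positive} integers. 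Since $t<1$ forces each summand below $1$, Lemma~\ref{projtraces} applies to each, and Lemma~\ref{lemma1} glues the two resulting flat projections orthogonally. This replaces your open-ended infinite-expansion problem with a single explicit algebraic identity; if you want to salvage your approach, you would need to supply a genuine proof of finiteness of the non-negative expansion, which is considerably more work than the two-term trick.
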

\begin{proof}
Fix $t \in (0,1) \cap (4\mathbb Z + 4\mathbb Z\theta)$. With no loss of generality we can assume $t = 4k(n\theta-m)$ where $k,n\ge1$ and $m\ge0$.  (If $t = 4\ell(r - s\theta)$ where $r,s\ge0$, then $t = 4\ell[s(1-\theta)-(s-r)]$ so that we could obtain a $\sigma$-flat projection in $A_{1-\theta}$ of this trace which maps to $A_\theta$ via a Fourier compatible isomorphism that gives one a $\sigma$-flat projection of trace $t$.)

Since $0 \le \frac{m}n < \theta$, one can choose a pair of consecutive convergents $\frac{p}q, \frac{p'}{q'}$ of $\theta$ such that $0< \frac{m}n < \frac{p}q < \theta < \frac{p'}{q'}$ where $p'q-pq'=1$. We can write
\[
\theta = p'(q\theta-p) + p(p'-q'\theta), \qquad
1 = q'(q\theta-p) + q(p'-q'\theta)
\]
each as sums of positive terms.  Thus
\begin{align*}
t &= 4k[ np'(q\theta-p) + np(p'-q'\theta) - mq'(q\theta-p) - mq(p'-q'\theta) ]
\\
&= 4a(q\theta-p) + 4b(p'-q'\theta)
\end{align*}
where $a = k(np'-mq')$ and $ b = k(np-mq)$ are positive integers.  Now we are in the situation of Lemma \ref{projtraces} which gives us $\sigma$-flat projections $f_1$ and $f_2$ in $A_\theta$ with respective traces $4a(q\theta-p)$ and $4b(p'-q'\theta)$.  Using Lemma \ref{lemma1} there exists a Fourier invariant unitary $w$ such that the orthogonal sum $f_1 + wf_2w^*$ is a flat projection with trace $t$.
\end{proof}

\begin{cor}
{\it Let $\theta$ be irrational.  Then each number in $(0,\tfrac14) \cap (\mathbb Z + \mathbb Z\theta)$ is the trace of a cyclic projection.
}
\end{cor}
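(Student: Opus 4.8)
The plan is to deduce this corollary directly from the preceding proposition (that is, Theorem \ref{flattraces}), exploiting the fact that a flat projection is by definition the $\sigma^*$ of a cyclic one and therefore carries exactly four times its trace. Concretely, if I can produce a flat projection of trace $4t$, then unpacking the definition of flatness will hand me a cyclic projection of trace $t$.

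First I would fix $t \in (0,\tfrac14) \cap (\mathbb Z + \mathbb Z\theta)$ and observe that $4t$ then lies in $(0,1) \cap (4\mathbb Z + 4\mathbb Z\theta)$: indeed $4t < 1$ since $t < \tfrac14$, and $t \in \mathbb Z + \mathbb Z\theta$ gives $4t \in 4\mathbb Z + 4\mathbb Z\theta$. Applying Theorem \ref{flattraces} to the number $4t$ yields a flat projection $f$ in $A_\theta$ with $\vartau(f) = 4t$.

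Next I would unwind the definition of flatness. By definition $f = \sigma^*(g) = g + \sigma(g) + \sigma^2(g) + \sigma^3(g)$ for some cyclic projection $g$, whose four $\sigma$-images are mutually orthogonal. Since the canonical trace $\vartau$ is the unique tracial state of $A_\theta$, it is invariant under the automorphism $\sigma$, so $\vartau(\sigma^k(g)) = \vartau(g)$ for every $k$; hence $4t = \vartau(f) = 4\vartau(g)$, which gives $\vartau(g) = t$. Thus $g$ is a cyclic projection of trace $t$, as required.

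I do not expect a genuine obstacle here: the analytic content has already been carried by Theorem \ref{flattraces}, and the passage from flat to cyclic is purely definitional once one notes the $\sigma$-invariance of $\vartau$. The only point to keep honest is that the flat projection furnished by Theorem \ref{flattraces} really is presented in the form $\sigma^*(g)$ for a cyclic $g$ -- which it is, since ``flat'' means exactly this. Should one prefer an argument that avoids quoting Theorem \ref{flattraces} directly, one could instead decompose $t$, just as in the proof of that theorem, into a positive integral combination of convergent gaps $a(q\theta - p) + b(p' - q'\theta)$, obtain cyclic projections of those traces from Lemma \ref{projtraces}, and splice them together orthogonally by means of Lemma \ref{lemma1}; but the route through Theorem \ref{flattraces} is the shortest.
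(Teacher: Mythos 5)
Your proposal is correct and is exactly the argument the paper intends: the corollary is stated immediately after the proposition on flat traces and is left as the evident consequence of rescaling $t\mapsto 4t$, unpacking $f=\sigma^*(g)$, and using the $\sigma$-invariance of the unique trace to get $\vartau(g)=\tfrac14\vartau(f)=t$. Nothing further is needed.
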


\begin{thm}
{\it Let $\theta$ be irrational.  Then each number in $(0,\tfrac12) \cap (\mathbb Z + \mathbb Z\theta)$ is the trace of a semicyclic projection.
}
\end{thm}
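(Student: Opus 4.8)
The plan is to run the argument of the preceding Proposition (the flat-trace result) with the factor $4$ replaced by $2$ and with flat building blocks replaced by semicyclic ones. Fix $t \in (0,\tfrac12) \cap (\mathbb Z + \mathbb Z\theta)$. Exactly as in the flat case, after possibly replacing $\theta$ by $1-\theta$ through a Fourier-compatible isomorphism I may assume $t = k(n\theta - m)$ with $k,n \ge 1$, $m \ge 0$ and $0 \le \tfrac mn < \theta$, and then choose consecutive convergents $\tfrac pq, \tfrac{p'}{q'}$ of $\theta$ with $0 < \tfrac mn < \tfrac pq < \theta < \tfrac{p'}{q'}$ and $p'q - pq' = 1$. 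Writing $\theta = p'(q\theta - p) + p(p' - q'\theta)$ and $1 = q'(q\theta - p) + q(p' - q'\theta)$ as sums of positive terms gives $t = a(q\theta - p) + b(p' - q'\theta)$ with $a = k(np' - mq')$ and $b = k(np - mq)$ positive integers; since $t < \tfrac12$, both summands lie in $(0,\tfrac12)$.

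The one new ingredient is the order-two analog of Lemma \ref{projtraces}: for a convergent $\tfrac pq$ of $\theta$ and a positive integer $\ell$ with $\ell|q\theta - p| < \tfrac12$, there is a semicyclic projection of trace $\ell|q\theta - p|$. The bound $\tfrac14$ in Lemma \ref{projtraces} reflects that a cyclic projection is orthogonal to its whole four-element $\sigma$-orbit; a semicyclic projection need only be flip-invariant and orthogonal to its single image under $\sigma$, so the admissible trace doubles to $\tfrac12$. I would obtain these projections by carrying out the construction of \ccite{SWmathscand} inside the flip fixed-point algebra $A_\theta^\Phi$, on which $\sigma$ restricts to an order-two automorphism, so that the small Powers-Rieffel-type projection produced there is automatically flip-invariant and orthogonal to its $\sigma$-image. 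Applying this with $\ell = a$ at $\tfrac pq$ and with $\ell = b$ at $\tfrac{p'}{q'}$ yields semicyclic projections $h_1, h_2$ of traces $a(q\theta - p)$ and $b(p' - q'\theta)$, each below $t < \tfrac12$.

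To assemble the pieces I pass to the associated semiflat projections $f_1 = h_1 + \sigma(h_1)$ and $f_2 = h_2 + \sigma(h_2)$, which are Fourier invariant with $\vartau(f_1) + \vartau(f_2) = 2t < 1$. By Lemma \ref{lemma1} (taking $e = 1$) there are $\sigma$-invariant unitaries $w_1, w_2$ with $w_1 f_1 w_1^* \perp w_2 f_2 w_2^*$. Since each $w_j$ is $\sigma$-invariant it is flip-invariant, so $w_j h_j w_j^*$ is again semicyclic and $w_j f_j w_j^*$ is its semiflat companion; moreover $\sigma(w_j h_j w_j^*) = w_j \sigma(h_j) w_j^* \le w_j f_j w_j^*$. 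Because both $w_1 h_1 w_1^*, \sigma(w_1 h_1 w_1^*)$ are supported under $w_1 f_1 w_1^*$ and both $w_2 h_2 w_2^*, \sigma(w_2 h_2 w_2^*)$ under the orthogonal $w_2 f_2 w_2^*$, all cross products vanish, while $w_j h_j w_j^* \,\sigma(w_j h_j w_j^*) = w_j h_j\sigma(h_j) w_j^* = 0$. Hence
\[
h := w_1 h_1 w_1^* + w_2 h_2 w_2^*
\]
is flip-invariant with $h\sigma(h) = 0$, i.e. it is semicyclic, of trace $\vartau(h_1) + \vartau(h_2) = t$.

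The crux, and the only genuinely new point, is the semicyclic building block for \emph{odd} lattice traces. It cannot be obtained by doubling a cyclic projection: if $g$ is cyclic then $g + \sigma^2(g)$ is semicyclic but of trace $2\vartau(g) \in 2(\mathbb Z + \mathbb Z\theta)$, whereas $(0,\tfrac12) \cap (\mathbb Z + \mathbb Z\theta)$ contains elements outside $2(\mathbb Z + \mathbb Z\theta)$ (for instance $\theta$ itself when $\theta < \tfrac12$, since it is not twice an element of $\mathbb Z + \mathbb Z\theta$). Thus the real work is to produce a genuinely flip-invariant, non-cyclic projection of primitive trace $|q\theta - p|$ orthogonal to its $\sigma$-image; making the order-two reduction on $A_\theta^\Phi$ explicit, and verifying that it realizes precisely the trace $\ell|q\theta-p|$ rather than only an even multiple of it, is where the main effort lies.
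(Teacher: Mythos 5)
There is a genuine gap, and you have in fact flagged it yourself: your entire argument rests on an ``order-two analog of Lemma \ref{projtraces}'' producing a semicyclic projection of trace $\ell|q\theta-p|$ for every positive integer $\ell$ with $\ell|q\theta-p|<\tfrac12$, including \emph{odd} $\ell$. You do not prove this; you only say you ``would obtain'' it by rerunning the construction of the cited paper inside $A_\theta^\Phi$, and you close by conceding that making this explicit ``is where the main effort lies.'' Since you correctly observe that doubling a cyclic projection only yields traces in $2(\mathbb Z+\mathbb Z\theta)$, the odd-multiple building block is precisely the content of the theorem for primitive traces such as $\theta$ itself, so the proposal is circular at its core: the one genuinely new ingredient is asserted, not constructed. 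The surrounding machinery (the convergent decomposition $t=a(q\theta-p)+b(p'-q'\theta)$ and the orthogonal assembly via Lemma \ref{lemma1}) is sound but does nothing to fill this hole.

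The paper closes the gap by a much softer observation that makes the whole convergent apparatus unnecessary: semicyclicity passes to flip-invariant subprojections. If $h$ is semicyclic and $e\le h$ is flip-invariant, then $\sigma(e)\le\sigma(h)$ and $h\sigma(h)=0$ force $e\sigma(e)=0$, so $e$ is semicyclic. One therefore first gets semicyclic projections of every \emph{even} trace $2x\in(0,\tfrac12)$ as $g+\sigma^2(g)$ with $g$ cyclic of trace $x<\tfrac14$ (the corollary to the flat-trace proposition), and then, given arbitrary $t\in(0,\tfrac12)\cap(\mathbb Z+\mathbb Z\theta)$, picks by density such an $h$ with $t<2x$ and takes a flip-invariant subprojection of $h$ of trace $t$. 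You should either adopt this reduction or actually carry out the order-two construction you sketch; as written, the proposal does not constitute a proof.
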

\begin{proof}
First, note that each number $2x$ in $(2\mathbb Z + 2\mathbb Z\theta) \cap (0,\tfrac12)$ is the trace of a semicyclic projection. Since $x < \frac14$, the preceding corollary gives a cyclic projection $g$ of trace $x$. The projection $g + \sigma^2(g)$ is then semicyclic of trace $2x$.  

Now fix $t \in (0,\tfrac12) \cap (\mathbb Z + \mathbb Z\theta)$. By density, pick $2x \in (2\mathbb Z + 2\mathbb Z\theta) \cap (0,\tfrac12)$ such that $t < 2x$. By the claim just proved, there exists a semicyclic projection $h$ of trace $2x$. 
Picking a flip-invariant subprojection $e$ of $h$ of trace $t$ (as $t<2x$), one gets  a semicyclic projection $e$ of trace $t$.
\end{proof}

\begin{lem}\label{sumsquares}
Let $\theta\in(0,1)$ be irrational. Then for each pair of integers $m,n$ there exists a Fourier invariant projection of trace $(m^2+n^2)\theta\!\!\mod 1$.
\end{lem}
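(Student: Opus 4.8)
The plan is to reduce the statement to the single case $(m,n)=(1,0)$ by passing to a $\sigma$-invariant rotation subalgebra whose parameter is precisely $(m^2+n^2)\theta$. If $(m,n)=(0,0)$ the trace is $0$ and the zero projection works, so assume $(m,n)\neq(0,0)$. Realize $\sigma$ on $\sr$ as the classical Fourier transform; it permutes the Weyl operators $W(a,b)$ (the phase-normalized $U^aV^b$, indexed by $\mathbb Z^2$) through the quarter turn $R(a,b)=(b,-a)$, since $R(1,0)=(0,-1)$ and $R(0,1)=(1,0)$ reproduce $\sigma(U)=V^{-1}$ and $\sigma(V)=U$. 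First I would introduce the sublattice $\Lambda=\mathbb Z(m,n)+\mathbb Z(n,-m)\subset\mathbb Z^2$. The elementary point that drives everything is that $R$ cyclically permutes the four points $(m,n),(n,-m),(-m,-n),(-n,m)$, so $R(\Lambda)=\Lambda$, while the index $[\mathbb Z^2:\Lambda]=m^2+n^2$ (the absolute value of the $2\times2$ determinant with rows $(m,n)$ and $(n,-m)$) equals the magnitude of the symplectic area of the two generators. This is exactly where the sum of two squares enters.

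Consequently the C*-subalgebra $B\subseteq A_\theta$ generated by $U'=W(m,n)$ and $V'=W(n,-m)$ is a copy of the rotation algebra $A_{\theta'}$ with $\theta'=(m^2+n^2)\theta$, and $\sigma$ carries $B$ into itself with $\sigma(U')=V'$ and $\sigma(V')=(U')^{-1}$; after interchanging the two generators this is precisely the noncommutative Fourier transform of $A_{\theta'}$. Since $\theta'$ is irrational and $B$ contains the unit of $A_\theta$, the restriction $\vartau|_B$ is the canonical trace of $A_{\theta'}$. Hence it is enough to produce, inside $A_{\theta'}$, a projection fixed by its own Fourier transform and of trace $\theta'\bmod 1=(m^2+n^2)\theta\bmod 1$: regarded in $A_\theta$, such a projection is $\sigma$-invariant with exactly the required trace. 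In short, the whole assertion collapses to the base case of a Fourier-invariant projection whose trace equals the rotation parameter.

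For that base case I would call on the classical Fourier eigenfunction. The normalized Gaussian $g_0(x)=2^{1/4}e^{-\pi x^2}$ satisfies $\mathcal F g_0=g_0$, so the finitely generated projective $A_{\theta'}$-module it generates inside the Heisenberg module $\sr$ carries an idempotent $p\in A_{\theta'}$ whose trace is the coupling constant of the module, which one computes (for a suitably scaled window) to be $\theta'\bmod1$; and because the Fourier transform implementing $\sigma$ fixes $g_0$ and preserves the module inner product, $p$ is Fourier-invariant. I expect the main obstacle to be exactly this step: checking that the Gaussian yields a genuine idempotent of the exact trace $\theta'\bmod1$ and that Fourier-invariance survives the module construction — the analytic heart, which needs the tight-frame/normalization estimates for the Gaussian window rather than the formal lattice bookkeeping of the reduction. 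One should also keep track of orientation and of the reduction mod $1$, passing from $A_{\theta'}$ to $A_{1-(\theta'\bmod1)}$ through a Fourier-compatible isomorphism when that is more convenient. Alternatively, one can avoid the explicit window: the vector $V_7$ of Lemma \ref{lemma9vectors}, of trace $\theta$, lies in the range of $\T_4$ and is realized by an honest Fourier-invariant projection of trace equal to the parameter, and applying this inside $A_{\theta'}$ supplies the base case at once.
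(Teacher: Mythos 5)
Your proposal is correct and follows essentially the same route as the paper: the paper likewise passes to the $\sigma$-invariant copy of $A_{(m^2+n^2)\theta}$ generated by the phase-normalized monomials $e(\tfrac12 mn\theta)V^{-n}U^m$ and $e(\tfrac12 mn\theta)U^nV^m$, on which $\sigma$ restricts to the Fourier transform, thereby reducing everything to a Fourier-invariant projection of trace $\theta$. For that base case the paper simply cites Theorem 1.1 of \cite{SWcrelles} (with $p=0$, $q=1$) rather than re-deriving the Gaussian/Heisenberg-module construction you sketch, but the underlying fact is the same.
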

\begin{proof}
By Theorem 1.1 of \ccite{SWcrelles} for any rational $p/q$ such that $0 < q|q\theta-p| < 1$, there is a Fourier invariant projection of trace $q|q\theta-p|$.  Applying this with $p=0, q=1$, one obtains a Fourier invariant projection of trace $\theta$.  Fix $m,n$ and let
\[
\theta' = (m^2+n^2)\theta\text{ mod 1}.
\]
The unitaries
\[
\widetilde U = e(\tfrac12 mn\theta) V^{-n}U^{m}, \qquad \widetilde V = e(\tfrac12 mn\theta) U^nV^m
\]
are easily checked to satisfy 
\[
\widetilde V \widetilde U = e(\theta') \widetilde U \widetilde V, \qquad 
\sigma(\widetilde U) = \widetilde V^{-1}, \qquad 
\sigma(\widetilde V) = \widetilde U
\]
so that $\sigma$ induces the Fourier transform on the rotation C*-subalgebra $A_{\theta'}$ of $A_\theta$ generated by $\widetilde U$ and $\widetilde V$.  Therefore, by what was just noted, there exists a Fourier invariant projection in $A_{\theta'}$ (hence in $A_\theta$) of trace $\theta'$, as required.
\end{proof}

\begin{thm}\label{TracesFourierInvariant}
Let $\theta$ be irrational in $(0,1)$. Each $t \in (0,1) \cap (\mathbb Z + \mathbb Z\theta)$ is the trace of a Fourier invariant projection.
\end{thm}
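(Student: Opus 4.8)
The plan is to keep track of the set $T$ of all numbers in $(0,1)$ that arise as traces of Fourier invariant projections in $A_\theta$, and to show that $T \supseteq (0,1)\cap(\mathbb Z + \mathbb Z\theta)$. I would first record three elementary closure properties of $T$. If $t_1, t_2 \in T$ with $t_1 + t_2 < 1$, then $t_1 + t_2 \in T$: by Lemma \ref{lemma1} a Fourier invariant copy of the second projection can be placed orthogonally inside the first of trace $1$ minus the first, and the orthogonal sum of two $\sigma$-invariant projections is again $\sigma$-invariant. If $t_1, t_2 \in T$ with $t_2 < t_1$, then $t_1 - t_2 \in T$, by passing to a Fourier invariant subprojection (again Lemma \ref{lemma1}). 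Finally, $t \in T$ implies $1 - t \in T$, since $e \mapsto 1-e$ preserves $\sigma$-invariance.

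Next I would reduce the target. Writing $t = a + b\theta \in (0,1)$ with $a,b \in \mathbb Z$, we must have $b \ne 0$ (otherwise $t \in \mathbb Z \cap (0,1) = \varnothing$), and then $t$ is exactly the fractional part $b\theta \bmod 1$. Using the complement property to flip the sign of $b$, it suffices to prove that $k\theta \bmod 1 \in T$ for every integer $k \ge 1$. To produce these I would invoke Lemma \ref{sumsquares}, which gives a Fourier invariant projection of trace $(m^2+n^2)\theta \bmod 1$ for every pair $m,n$; hence $(m^2+n^2)\theta \bmod 1 \in T$ whenever $m^2+n^2 \ge 1$. The arithmetic input is that every positive integer is a difference of two sums of two squares: for odd $k$ one has $k = (\tfrac{k+1}{2})^2 - (\tfrac{k-1}{2})^2$, for multiples of four $4j = (j+1)^2 - (j-1)^2$, and $4j+2 = [(2j+1)^2 + 1] - (2j)^2$. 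Writing $k = s_1 - s_2$ with $s_1, s_2$ sums of two squares and $s_1 \ge 1$, I set $\alpha = s_1\theta \bmod 1 \in T$ and $\beta = s_2\theta \bmod 1$. If $s_2 = 0$ then $k\theta \bmod 1 = \alpha \in T$; otherwise $\beta \in T$, and since $\theta$ is irrational $\alpha \ne \beta$, so $k\theta \bmod 1 = (\alpha - \beta)\bmod 1$ equals $\alpha - \beta \in T$ when $\alpha > \beta$, and equals $1 - (\beta - \alpha) \in T$ when $\alpha < \beta$, by the difference and complement properties. This yields $k\theta \bmod 1 \in T$ for all $k \ge 1$, and combined with the reduction above it gives $t \in T$.

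The only genuinely non-formal ingredient is the elementary number-theoretic fact that $\mathbb Z$ is the difference set of the sums of two squares, which feeds the building blocks of Lemma \ref{sumsquares} into the closure operations; everything else is bookkeeping. The point requiring the most care will be the handling of the $\bmod 1$ wraparound, where one must consistently use complementation ($t \mapsto 1-t$) to convert an apparent negative difference back into an admissible trace in $(0,1)$.
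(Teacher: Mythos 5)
Your argument is correct, and it reaches the theorem by a genuinely different route from the paper's. Both proofs feed the outputs of Lemma \ref{sumsquares} (traces $(m^2+n^2)\theta \bmod 1$) into the closure operations supplied by Lemma \ref{lemma1} and complementation, but the arithmetic decomposition differs. The paper writes the coefficient $m$ of $\theta$ as a sum of four squares via Lagrange's theorem, splits this into two sums of two squares, and \emph{adds} the two resulting projections; since the sum of their traces can overshoot into $(1,2)$, it handles the overflow by observing that $[e]+[f]-[1]=[e]-[1-f]$ has positive trace and realizing that class as $e - w(1-f)w^*$. You instead write every positive integer as a \emph{difference} of two sums of two squares via the elementary identities $2j+1=(j+1)^2-j^2$, $4j=(j+1)^2-(j-1)^2$, $4j+2=\bigl((2j+1)^2+1\bigr)-(2j)^2$, and then \emph{subtract}, using complementation ($t\mapsto 1-t$) to absorb the $\bmod\ 1$ wraparound when the difference of fractional parts goes negative. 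Your version avoids Lagrange's theorem entirely, at the cost of a three-way case split on $k \bmod 4$ and a slightly more delicate bookkeeping of the wraparound; the paper's version needs the deeper four-square input but has only the single dichotomy $k\in\{0,1\}$. All the closure properties you invoke (orthogonal sum below trace $1$, passage to a Fourier invariant subprojection, complementation) do follow from Lemma \ref{lemma1} and $\sigma(1-e)=1-e$ exactly as you say, and the reduction to $k\theta\bmod 1$ with $k\ge 1$ is sound since $b\ne 0$ forces $t$ to be a fractional part of a multiple of $\theta$. I see no gap.
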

\begin{proof}
Write $t = m\theta - n < 1$, and assume, with no loss of generality, that $m > 0$.  By Lagrange's Theorem, write $m = m_1^2 + m_2^2 + m_3^2 + m_4^2$ as a sum of four integer squares.  By Lemma \ref{sumsquares}, there are Fourier invariant projections $e$ of trace $(m_1^2+m_2^2)\theta - n_1 < 1$ and $f$ of trace $(m_3^2+m_4^2)\theta - n_2 < 1$ for some nonnegative integers $n_1, n_2$.  The class $[e] + [f]$ in $K_0(A_\theta^\sigma)$ has positive trace
\begin{equation}\label{sumtraces}
(m_1^2 + m_2^2 + m_3^2 + m_4^2)\theta - n_1 - n_2 \ = \ t  + k \ < \ 2
\end{equation}
where $k=n-n_1-n_2$ is either $0$ or $1$.  If $k = 0$, then since the sum of the traces of $e$ and $f$ is less than 1, Lemma \ref{lemma1} implies that there are unitaries $u,v$ in $A_\theta^\sigma$ such that $ueu^* + vfv^*$ is a Fourier invariant projection of trace $t$.  If $k = 1$, then $[e] + [f] - [1] = [e] - [1-f]$ has positive trace $t$ (from \eqref{sumtraces}), which means that $[e] > [1-f]$ in $K_0(A_\theta^\sigma)$, so that $1-f$ is unitarily equivalent to a subprojection of $e$ by a unitary $w$ in $A_\theta^\sigma$: $w(1-f)w^* \le e$.  One therefore gets the Fourier invariant projection $e - w(1-f)w^*$ of trace $t$.
\end{proof}

\textcolor{blue}{\large\section{Determination of Flat and Semiflat Projections}}

In this section we prove Theorems \ref{identifyflat} and \ref{identifysemiflat}.

\medskip

\begin{thm}
{\it Let $\theta$ be any irrational number, and let $g_1$ and $g_2$ be two cyclic projections in $A_\theta$.
\begin{enumerate}
\item Then $g_1$ and $g_2$ are unitarily equivalent by a unitary in $A_\theta^\sigma$ if and only if $g_1$ and $g_2$ have equal traces.
\item Two flat projections $f_1=\sigma^*(g_1)$ and $f_2 = \sigma^*(g_2)$ are $\sigma$-unitarily equivalent iff they have the same trace iff $g_1$ and $g_2$ $\sigma$-unitarily equivalent.
\end{enumerate}
}
\end{thm}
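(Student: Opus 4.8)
The whole statement rests on the hard (backward) direction of part (1), so I would isolate that first and let everything else follow formally. The easy direction of (1) is immediate: $\sigma$-unitary equivalence preserves the trace. Part (2) is then a formal consequence of (1): if $g_1,g_2$ are $\sigma$-unitarily equivalent by some $w\in A_\theta^\sigma$, then since $\sigma(w)=w$ we get $w f_1 w^* = \sum_{k=0}^3 w\sigma^k(g_1)w^* = \sum_{k=0}^3 \sigma^k(wg_1w^*) = \sum_{k=0}^3\sigma^k(g_2) = f_2$, so $f_1,f_2$ are $\sigma$-unitarily equivalent; conversely such equivalence of the $f_i$ forces $\vartau(f_1)=\vartau(f_2)$, hence $\vartau(g_1)=\tfrac14\vartau(f_1)=\tfrac14\vartau(f_2)=\vartau(g_2)$, and part (1) returns the $\sigma$-unitary equivalence of the $g_i$. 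Thus I only need to produce, from the single hypothesis $\vartau(g_1)=\vartau(g_2)$, a $\sigma$-invariant unitary carrying $g_1$ to $g_2$.

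For this I would use an averaging construction. Since $\vartau_*$ is injective on $K_0(A_\theta)$ and $A_\theta$ has cancellation, $g_1$ and $g_2$ are Murray--von Neumann equivalent \emph{in} $A_\theta$: choose $v_0\in A_\theta$ with $v_0^*v_0=g_1$ and $v_0v_0^*=g_2$. This $v_0$ need not be $\sigma$-invariant, so I symmetrize it over the $\sigma$-orbit by setting $w_0=\sum_{k=0}^3\sigma^k(v_0)$. Because $g_1$ is cyclic, the summand $\sigma^k(v_0)$ is supported on the orthogonal block $\sigma^k(g_1)\to\sigma^k(g_2)$, so all cross terms vanish and one checks directly that $w_0^*w_0=\sum_k\sigma^k(g_1)=f_1$, $w_0w_0^*=f_2$, while $\sigma(w_0)=w_0$ by reindexing and $w_0g_1w_0^*=g_2$. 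Hence $w_0$ is a $\sigma$-invariant partial isometry from $f_1$ to $f_2$ implementing $g_1\mapsto g_2$.

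The remaining step --- which I expect to be the genuine obstacle --- is to complete $w_0$ to an honest unitary of $A_\theta^\sigma$; this is possible precisely when $1-f_1$ and $1-f_2$ are equivalent in $A_\theta^\sigma$, and here the topological invariants intervene. Since $g_1,g_2$ are cyclic, the sums $f_1,f_2$ are flat, so all their $\psi$-invariants vanish (the fact invoked in the proof of Lemma~\ref{negativetype}); together with $\vartau(f_1)=\vartau(f_2)$ this gives $\T_4(f_1)=\T_4(f_2)=(\vartau(f_1);\,0,0;\,0,0,0)$, whence $\T_4(1-f_1)=\T_4(1-f_2)$. By injectivity of $\T_4$ and the cancellation property of $A_\theta^\sigma$, there is a partial isometry $w_1\in A_\theta^\sigma$ with $w_1^*w_1=1-f_1$ and $w_1w_1^*=1-f_2$, and then $w=w_0+w_1$ is a $\sigma$-invariant unitary with $wg_1w^*=g_2$. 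I would emphasize that the averaging trick alone yields only the partial isometry on the orbit support; the promotion to a unitary succeeds only because flat projections carry no surviving topological invariants, which is exactly what makes the trace a \emph{complete} $\sigma$-unitary invariant here (in pointed contrast to the semiflat case of Theorem~\ref{identifysemiflat}, where the extra $\phi$-invariants do survive and must be matched).
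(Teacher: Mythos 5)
Your proposal is correct and follows essentially the same route as the paper: equal traces give a Murray--von Neumann equivalence $v_0$ in $A_\theta$, cyclicity kills the cross terms in the orbit average $w_0=\sum_k\sigma^k(v_0)$ to produce a $\sigma$-invariant partial isometry between the flat projections carrying $g_1$ to $g_2$, and the completion to a unitary uses that $1-f_1$ and $1-f_2$ have equal $\T_4$ invariants (all $\psi$-invariants of flat projections vanish) together with injectivity of $\T_4$ and cancellation in $A_\theta^\sigma$. The only difference is organizational --- you prove (1) directly and deduce (2) formally, whereas the paper proves (2) first and derives (1) from it --- but the key constructions coincide.
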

\begin{proof} We prove (2) first. If $g_1$ and $g_2$ are $\sigma$-unitarily equivalent then clearly so are $f_1$ and $f_2$. So we start with two $\sigma$-unitarily equivalent flat projections $f_1, f_2$. Since the projections $g_1, g_2$ have the same trace, they are Murray von Neumann equivalent in $A_\theta$ (by a theorem of Rieffel). Let $v \in A_\theta$ be a partial isometry such that $vv^* = g_1,\ v^*v=g_2$ and $g_1v=v=vg_2$.  As the projections $g_1$ and $g_2$ are cyclic, one has 
\[
v^*\sigma^j(v) = 0, \qquad v\sigma^j(v^*)=0
\]
for $j=1,2,3$. The first of these follows by replacing $v$ by $g_1v$ and likewise the second by replacing $v$ by $vg_2$. This lends us the Fourier invariant element
\[
w = v + \sigma(v) + \sigma^2(v) + \sigma^3(v)
\]
which acts as a partial isometry between the flat projections
\[
ww^* = f_1, \qquad w^*w = f_2.
\]
Further, one has
\begin{align*}
g_1w &= vv^*[v + \sigma(v) + \sigma^2(v) + \sigma^3(v)] = v, \\
wg_2 &= [v + \sigma(v) + \sigma^2(v) + \sigma^3(v)] v^*v  = v
\end{align*}
where we noted that $vv^*v = v$. These give $f_1w = w = wf_2$.

As the complements $1-f_1$ and $1-f_2$ are also equivalent projections in $A_\theta^\sigma$ (having same $\T_4$'s), there is a $\sigma$-invariant partial isometry $x$ such that $xx^* = 1-f_1$ and $x^*x = 1-f_2$, as well as $(1-f_1)x = x = x(1-f_2)$. Since one has $wx^* = 0 = w^*x$, the element $w + x$ is a $\sigma$-invariant unitary that is easily checked to satisfy
\[
g_1(w+x) = (w+x)g_2.
\]
This proves (2).

To see (1), assume $g_1$ and $g_2$ have equal trace.  Since their corresponding flat projections $f_1 = \sigma^*(g_1), \ f_2 = \sigma^*(g_2)$ also have equal traces, we have $\T_4(f_1) = \T_4(f_2) = (\text{trace}; 0,0; 0,0,0)$ (recalling that the topological invariants $\psi_{\star}$ of flat projections all vanish). By the injectivity of $\T_4$, it follows that $f_1$ and $f_2$ are $\sigma$-unitarily equivalent, and the result follows from (2).
\end{proof}

\begin{rmk}
In view of (2), it also follows that $g_1$ is $\sigma$-unitarily equivalent to $\sigma^j(g_2)$ for any $j$. 
\end{rmk}

\medskip

\begin{thm}
{\it Let $\theta$ be any irrational number, and let $g$ and $h$ be two semicyclic projections in $A_\theta^\Phi$.
\begin{enumerate}
\item Then $g$ and $h$ are $\sigma$-unitarily equivalent iff they are $\Phi$-unitarily equivalent iff  $\T_2(g) = \T_2(h)$.
\item Two semiflat projections $f=g+\sigma(g)$ and $f' = h + \sigma(h)$ are equivalent in $A_\theta^\sigma$ iff $\vartau(g) = \vartau(h)$ and
\[
\phi_{00}(g) = \phi_{00}(h), \qquad \phi_{11}(g) = \phi_{11}(h), \qquad
\phi_{01}(g) + \phi_{10}(g) = \phi_{01}(h) + \phi_{10}(h),
\]
\end{enumerate}
}
\end{thm}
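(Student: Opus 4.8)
The plan is to prove the three-fold equivalence in (1) as a cycle of implications, concentrating all the new work on the implication $\T_2(g)=\T_2(h)\Rightarrow g\sim_\sigma h$, and then to read off (2) from a single computation of $\T_4(f)$. First I would dispose of the cheap implications in (1). Any $\sigma$-invariant unitary is automatically $\Phi$-invariant because $\Phi=\sigma^2$, so $\sigma$-unitary equivalence implies $\Phi$-unitary equivalence; and since the $\phi_{ij}$ are $\Phi$-traces, conjugation by a $\Phi$-invariant unitary preserves $\T_2$, so $\Phi$-unitary equivalence implies $\T_2(g)=\T_2(h)$. It remains to close the cycle.

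For the hard step I would first work inside the flip orbifold. Because $\T_2$ is injective on $K_0(A_\theta^\Phi)$ (Proposition 3.2 of \ccite{SWprojmodules}) and $A_\theta^\Phi$ has stable rank one (its crossed product $A_\theta\rtimes_\Phi\mathbb Z_2$ is AF and the fixed-point algebra is strongly Morita equivalent to it, as invoked in the excerpt), the equality $\T_2(g)=\T_2(h)$ produces a $\Phi$-invariant partial isometry $v$ with $vv^*=g$ and $v^*v=h$. The crucial device is then the symmetrized element $v':=v+\sigma(v)$. Since $v$ is $\Phi$-invariant one has $\sigma(v')=\sigma(v)+\sigma^2(v)=\sigma(v)+v=v'$, so $v'\in A_\theta^\sigma$. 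Using semicyclicity, precisely $g\sigma(g)=0$ and $h\sigma(h)=0$, every cross term collapses (for instance $v\sigma(v)^*=vh\,\sigma(h)\sigma(v)^*=0$), giving $v'v'^*=g+\sigma(g)=:f$ and $v'^*v'=h+\sigma(h)=:f'$; thus $v'$ is a $\sigma$-invariant partial isometry implementing $f\sim_\sigma f'$, and a short check gives $v'hv'^*=g$. To upgrade $v'$ to a unitary I would match complements: applying the cancellation property of $A_\theta^\sigma$ to $f+(1-f)=1=f'+(1-f')$ together with $[f]=[f']$ yields a $\sigma$-invariant partial isometry $x$ with $xx^*=1-f$ and $x^*x=1-f'$, and then $w:=v'+x$ is a $\sigma$-invariant unitary satisfying $whw^*=g$. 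This completes (1).

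For (2), since $\T_4$ is injective on $K_0(A_\theta^\sigma)$ and $A_\theta^\sigma$ has cancellation, one has $f\sim_\sigma f'$ if and only if $\T_4(f)=\T_4(f')$, so the whole statement reduces to computing $\T_4(f)$. Using the relations \eqref{phiandpsi} and the additivity identities $\psi_{2k}(g+\sigma(g))=2\psi_{2k}(g)$ established in the discussion preceding Lemma \ref{typeslemma}, I get $\vartau(f)=2\vartau(g)$, $\psi_{20}(f)=2\phi_{00}(g)$, $\psi_{21}(f)=2\phi_{11}(g)$, and $\psi_{22}(f)=2(\phi_{01}(g)+\phi_{10}(g))$, while $\psi_{10}(f)=\psi_{11}(f)=0$ because $f$ is semiflat (Theorem \ref{maintheorem}). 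Reading off coordinates, $\T_4(f)=\T_4(f')$ is equivalent to $\vartau(g)=\vartau(h)$ together with the three displayed identities; note that only the sum $\phi_{01}+\phi_{10}$ enters, reflecting that passage to $f$ loses exactly the data separating $\phi_{01}$ from $\phi_{10}$.

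I expect the main obstacle to be the hard implication of (1). The delicate points are that the initial partial isometry $v$ must be chosen $\Phi$-invariant, which is where the stable rank of $A_\theta^\Phi$ is used, and that the symmetrized $v'$ has the correct source and range projections, which hinges on the semicyclicity hypotheses forcing the cross terms to vanish. The final passage from the partial isometry $v'$ to an honest $\sigma$-invariant unitary $w$ then rests squarely on the cancellation property of the orbifold $A_\theta^\sigma$; part (2), by contrast, is essentially bookkeeping once $\T_4(f)$ is in hand.
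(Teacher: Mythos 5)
Your proposal is correct and takes essentially the same route as the paper: the key step in (1) is the same symmetrization $u\mapsto u+\sigma(u)$ of a $\Phi$-invariant partial isometry between $g$ and $h$ (with the cross terms annihilated by $g\sigma(g)=h\sigma(h)=0$), followed by matching the complements $1-f$, $1-f'$ via cancellation in $A_\theta^\sigma$ to produce the $\sigma$-invariant unitary. Part (2) is likewise handled exactly as in the paper, by reducing to the injectivity of $\T_4$ and reading off the coordinates through the relations \eqref{phiandpsi}.
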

\begin{proof} 
For (1) it is enough to assume that $g$ and $h$ are $\Phi$-unitarily equivalent (since it is already known from \ccite{SWprojmodules} that this is equivalent to $\T_2(g) = \T_2(h)$). Let $u$ be a partial isometry in $A_\theta^\Phi$ such that $uu^* = g, u^*u = h$ (and also $gu=u=uh$). The orthogonalities $g\sigma(g) = h\sigma(h)=0$ give $u^*\sigma(u) = 0 = u\sigma(u^*)$. Letting $w = u + \sigma(u)$, we get
\[
ww^* = g + \sigma(g) =: f, \qquad w^*w = h + \sigma(h) =: f'
\]
(and $fw = w = wf'$) so that $w$ is a $\sigma$-invariant partial isometry. Further, $gw = u = wh$ (as $uu^*u=u$). Since $1-f$ and $1-f'$ are also equivalent projections in $A_\theta^\sigma$ (since they have equal $\T_4$'s), there is a $\sigma$-invariant partial isometry $w'$ such that $w'w'^* = 1-f$ and $w'^*w' = 1-f'$. One then forms the $\sigma$-invariant unitary
\[
W = w + w'
\]
which satisfies $gW = Wh$. 

For statement (2), if $f=g+\sigma(g)$ and $f' = h + \sigma(h)$ are equivalent in $A_\theta^\sigma$ then in view of \eqref{phiandpsi} one has the assertion stated on the $\phi_{ij}$ values (and the trace). The converse follows likewise since the $\T_4$ invariants of $f$ and $f'$ are equal in view of the hypothesis.
\end{proof}

\begin{rmk}
We emphasize that the semicyclic projections $g,h$ in (2) of the preceding theorem are not necessarily equivalent even in the flip fixed point algebra $A_\theta^\Phi$. Indeed, given any $g$ one can consider a semicyclic projection $h$ arising from the equation
\[
\T_2(h) = \T_2(g) + (0; 0, n, -n,0)
\]
for any integer $n$ (where $h$ is obtained in the same manner that lead to Corollary \ref{corh}). Such $h$ gives rise to a semiflat projection $f' = h + \sigma(h)$ equivalent to $f = g + \sigma(g)$ (in $A_\theta^\sigma$), but $h$ is neither equivalent to $g$ nor to $\sigma(g)$. This contrasts with statement (2) of Theorem \ref{identifyflat}.
\end{rmk}

\medskip

\noindent{\bf Acknowledgement.} Too bad NSERC changed its rules which discouraged the author's applying for a research grant, or else it would have been (happily) acknowledged here! Nevertheless, NSERC is thanked for some 20 years of grant support (and the momentum it gave the author).



\begin{thebibliography}{10}

{\SMALL

\bibitem{BEEKa}
O.~Bratteli, G.~A.~Elliott, D.~E.~Evans, A.~Kishimoto, \emph{Non-commutative spheres I}, Internat. J. Math. {\bf2} (1990), no. 2, 139--166.

\bibitem{BEEKb}
O.~Bratteli, G.~A.~Elliott, D.~E.~Evans, A.~Kishimoto, \emph{Non-commutative spheres II: rational rotation}, J. Operator Theory {\bf27} (1992), 53--85.

\bibitem{BK}
O. Bratteli, A. Kishimoto, \emph{Non-commutative spheres III: Irrational rotation}, Comm. Math. Phys. {\bf147} (1992), 605--624.

\bibitem{BW}
J. Buck and S. Walters, \emph{Connes-Chern characters of hexic and cubic 
modules}, J. Operator Theory {\bf57} (2007), 35--65.

\bibitem{ELPW}
S. Echterhoff, W. L\"uck, N. C. Phillips, S. Walters,
\emph{The structure of crossed products of irrational rotation algebras by finite subgroups of ${\mathrm SL}_2(\mathbb Z)$}, J. Reine Angew. Math. (Crelle's Journal) {\bf 639} (2010), 173-221.


\bibitem{PV} 
M. Pimsner and D. Voiculescu, \emph{Exact sequences for K-groups and Ext-groups of certain crossed product C*-algebras}, J. Operator Theory {\bf4} (1980), 93-118.

\bibitem{AP}
A. Polishchuk, \emph{Holomorphic bundles on 2-dimensional noncommutative toric orbifolds}, in \emph{Noncommutative Geometry and Number Theory}, by C. Consani and M. Marcolli (eds.), Vieweg Publ. (2006), 341-359.

\bibitem{MR1981}
M. Rieffel, \emph{C*-algebras associated with irrational rotations},
Pacific J.~Math. {\bf 93}, No. 2 (1981), 415--429.



\bibitem{SWprojmodules}
S. G. Walters, \emph{Projective modules over the non-commutative sphere}, J. London Math. Soc. (2) {\bf51} (1995), no. 3, 589-602.

\bibitem{SWcmp}
S. G. Walters, \emph{Inductive limit automorphisms of the irrational rotation algebra}, Comm. Math. Phys. {\bf 171} (1995), 365--381.

\bibitem{SWChern}
S.~Walters, \emph{Chern characters of Fourier modules}, Canad. J. Math. {\bf 52} (2000), No. 3, 633--672.

\bibitem{SWcjm}
S. Walters, \emph{K-theory of non commutative spheres arising from the Fourier automorphism}, Canad. J. Math. {\bf53}, No. 3 (2001), 631--672.


\bibitem{SWcrelles}
S.\,Walters,\! \emph{The AF structure of non commutative toroidal $\mathbb Z/4\mathbb Z$ orbifolds}, J. Reine Angew. Math. (Crelle's Journal) {\bf568} (2004), 139--196.  arXiv: math.OA/0207239.

\bibitem{SWmathscand}
S. Walters, \emph{Decomposable Projections Related to the Fourier and flip 
Automorphisms}, Math. Scand. {\bf107} (2010), 174-197.

\bibitem{SWnuclearphysicsb}
S. Walters, \emph{Toroidal Orbifolds of $\mathbb Z_3$ and $\mathbb Z_6$ Symmetries of Noncommutative Tori}, Nuclear Physics B {\bf894} (2015), 496-526. DOI: http://dx.doi.org/10.1016/j.nuclphysb.2015.03.008 

\bibitem{SWK-ind}
S. Walters, \emph{The K-inductive Structure of the Noncommutative Fourier Transform}, preprint (2017), 13 pages.

}

\end{thebibliography}
\end{document}